\documentclass{amsart}
\usepackage{amsbsy}
\usepackage{amsfonts}
\usepackage{amsmath}
\usepackage{amsthm}
\usepackage{amssymb}
\usepackage{enumitem}
\usepackage{graphicx}
\usepackage{hyperref}
\addtolength{\voffset}{-20pt} \addtolength{\textheight}{40pt}
\addtolength{\hoffset}{-40pt} \addtolength{\textwidth}{80pt}
\setlength{\parindent}{10pt}
\setlength{\parskip}{2ex}

\begin{document}

\title{Macroscopic Pair Correlation of the Riemann Zeroes for Smooth Test Functions}
\author{Brad Rodgers}
\address{Department of Mathematics, UCLA, Los Angeles CA 90095-1555, USA}
\email{brodgers@math.ucla.edu}
\maketitle

\begin{abstract}
    On the assumption of the Riemann hypothesis, we show that over a class of sufficiently smooth test functions, a measure conjectured by Bogomolny and Keating coincides to a very small error with the actual pair correlation measure for zeroes of the Riemann zeta function. Our result extends the well known result of Montgomery that over the same class of test functions the pair correlation measure coincides (to a larger error term) with that of the Gaussian Unitary Ensemble (GUE). The restriction of test functions remains stringent, but we are nonetheless able to detect, at a microscopically blurred resolution, macroscopic troughs in the pair correlation measure.
\end{abstract}

\newenvironment{nmath}{\begin{center}\begin{math}}{\end{math}\end{center}}

\newtheorem{thm}{Theorem}[section]
\newtheorem{lem}[thm]{Lemma}
\newtheorem{prop}[thm]{Proposition}
\newtheorem{cor}[thm]{Corollary}
\newtheorem{conj}[thm]{Conjecture}
\newtheorem{dfn}[thm]{Definition}

\newcommand\MOD{\textrm{ (mod }}
\newcommand\s{\textrm{ }}

\section{Introduction}

This note is an attempt to see how far the macroscopic statistics of the Riemann zeta function can be understood in a rigorous fashion. By this we mean especially those numerical statistics which seem to indicate a statistical repulsion between the low lying zeroes of Riemann's zeta function and the differences of these zeroes. (See Figure \ref{data}.) Such statistics were first noticed by Bogomolny and Keating \cite{BogKeat}, who predicted them heuristically, and recently rediscovered by P\'{e}rez-Marco in \cite{Perez}. Snaith's paper \cite{Snaith} contains a relatively recent survey.

We will make one concession of rigor, which is to assume from this point that the Riemann Hypothesis is true. This is not entirely necessary, but without it the results (and conjectures) that follow would not be particularly meaningful.\footnote{If for a sufficiently nice function $f$ one understands $f(x+iy)$ in the harmonic sense to be $\int \hat{f}(\xi)e((x+iy)\xi)d\xi$, then what follows can made unconditional. This observation, which has been made before for similar problems, would seem to be of rather secondary interest.} We will show that a formula first conjectured by Bogomolny and Keating indicating the observed repulsion is true for sufficiently smooth test functions. Our approach in some ways consists in nothing more than carefully coloring in their heuristic computations.

We will follow the conventions that $e(x) = e^{i2\pi x}$, $\hat{f}(\xi) = \int e(-x \cdot \xi) f(x) dx$. In addition we use the notation $f(x) \lesssim g(x)$ in place of $f(x) \ll g(x)$.

If the non-trivial zeroes of $\zeta(s)$ are written in the form $1/2+i\gamma$, then the Riemann Hypothesis is the statement that $\gamma$ is always real. We will slightly abuse terminology by sometimes referring to the $\gamma$'s as `zeroes'; this should cause no confusion. The number of such $\gamma$ in an interval $[T\,,\,T+1]$ is known to be roughly $\tfrac{\log T}{2\pi}$, so that the spacing between consecutive zeroes in this interval is on average $\tfrac{2\pi}{\log T}$. (A slightly better approximation to density near $T$ is $\tfrac{\log (T/2\pi)}{2\pi}$, which will make an appearance later.)

In 1973 Montgomery \cite{Montgomery} made the following more precise conjecture concerning these spacings
\begin{conj}[Pair Correlation Conjecture]\label{PCC}
For a fixed Schwartz class test function $f$,
$$
\sum_{\substack{\gamma, \gamma' \in [0,T]\\\mathrm{distinct}}} f\big(\tfrac{\log T}{2\pi}(\gamma-\gamma')\big) = T\frac{\log T}{2\pi} \bigg(\int_{-\infty}^\infty f(u) \bigg[1-\Big(\frac{\sin \pi u}{\pi u}\Big)^2\bigg] du + o(1)\bigg).
$$
\end{conj}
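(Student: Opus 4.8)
The plan is to follow Montgomery's explicit-formula strategy, converting the sum over pairs of zeros into a sum over prime powers. Writing $f(x)=\int\hat f(\xi)e(\xi x)\,d\xi$, one first adjoins the diagonal term $\gamma=\gamma'$, which by the zero-counting estimate $N(T)=\tfrac{T\log T}{2\pi}(1+o(1))$ mentioned above contributes $f(0)N(T)=f(0)\tfrac{T\log T}{2\pi}(1+o(1))$, so that the whole sum becomes $\int\hat f(\xi)\sum_{0<\gamma,\gamma'\le T}T^{i\xi(\gamma-\gamma')}\,d\xi$. To make the inner sum tractable I would insert the positive-definite weight $w(u)=4/(4+u^2)$ and study Montgomery's function
$$F(\xi)=F(\xi,T)=\Big(\tfrac{T\log T}{2\pi}\Big)^{-1}\!\!\sum_{0<\gamma,\gamma'\le T}\!T^{i\xi(\gamma-\gamma')}\,w(\gamma-\gamma'),$$
which satisfies the exact Fourier-dual identity
$$\sum_{0<\gamma,\gamma'\le T}r\!\Big(\tfrac{\log T}{2\pi}(\gamma-\gamma')\Big)w(\gamma-\gamma')=\tfrac{T\log T}{2\pi}\int_{-\infty}^\infty F(\xi)\,\hat r(\xi)\,d\xi$$
for any $r$ with $\hat r\in L^1$. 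Reinstating the weight $w$ at the very end is routine: $1-w(u)=O(u^2)$ near $0$, and since $f$ is Schwartz the pairs on the relevant scale $|\gamma-\gamma'|\asymp 1/\log T$ lose only $O(T/\log T)=o(T\log T)$, while distant pairs are negligible.

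The crux is the evaluation of $F(\xi)$. I would apply the Riemann--Weil explicit formula to $\sum_\gamma x^{i\gamma}w(t-\gamma)$, which expresses it, up to lower-order terms coming from the $\Gamma$-factor, through a Dirichlet polynomial $\sum_{n\le x}\Lambda(n)(x/n)^{1/2\pm it}$. Squaring and integrating over $t\in[0,T]$ then recovers, on one side, a constant multiple of $\tfrac{T\log T}{2\pi}F(x,T)$ (the cross terms in $\big|\sum_\gamma x^{i\gamma}w(t-\gamma)\big|^2$ integrate in $t$ to essentially $w(\gamma-\gamma')$), and on the other side a diagonal $x\sum_{n\le x}\Lambda(n)^2/n\sim x\log x$, evaluable by the prime number theorem, together with off-diagonal terms $\sum_{m\ne n}\Lambda(m)\Lambda(n)(\cdots)$ that are killed by a mean-value theorem for Dirichlet polynomials such as the Montgomery--Vaughan inequality $\int_0^T\big|\sum_n a_n n^{-it}\big|^2dt=\sum_n|a_n|^2(T+O(n))$ --- a step that is efficient precisely when the polynomial has length $x\le T$. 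Putting $x=T^{\xi}$ this should yield, uniformly for $0\le\xi\le1$,
$$F(\xi)=(1+o(1))\,T^{-2\xi}\log T+\xi+o(1),$$
with $F(-\xi)=F(\xi)$ and $F(\xi)\ge0$ throughout.

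Feeding this back into the identity: if $\hat f$ is supported in $[-1,1]$ the integral only sees $|\xi|\le1$; the mass $T^{-2|\xi|}\log T\,d\xi$ concentrates at $\xi=0$ and contributes $\hat f(0)=\int f(u)\,du$, while $\int_{-1}^1|\xi|\,\hat f(\xi)\,d\xi=f(0)-\int f(u)\big(\tfrac{\sin\pi u}{\pi u}\big)^2du$ because $(1-|\xi|)_+$ is the Fourier transform of $(\sin\pi u/\pi u)^2$. Adding these contributions and then removing the adjoined diagonal $f(0)\tfrac{T\log T}{2\pi}$ leaves exactly $\tfrac{T\log T}{2\pi}\big(\int f(u)[1-(\sin\pi u/\pi u)^2]\,du+o(1)\big)$, which is the claimed formula.

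The obstacle I expect is fundamental and is the reason this remains a conjecture: the argument above only delivers the statement for $f$ whose Fourier transform is supported in $(-1,1)$, since the evaluation of $F(\xi)$ for $|\xi|>1$ is genuinely unknown. For $x>T$ the off-diagonal prime terms are no longer negligible --- they encode the correlation sums $\sum_n\Lambda(n)\Lambda(n+h)$, i.e.\ the Hardy--Littlewood conjecture --- and establishing the expected value $F(\xi)=1+o(1)$ for $\xi>1$ seems to require essentially that arithmetic input. So I anticipate that this plan proves only Montgomery's theorem (the band-limited case), with the full Pair Correlation Conjecture staying conditional on unproven statements about correlations of primes.
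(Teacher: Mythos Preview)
The statement you are addressing is labeled a \emph{Conjecture} in the paper, and the paper does not prove it; it merely records Montgomery's result that the formula holds for test functions $f$ with $\operatorname{supp}\hat f\subset(-1+\epsilon,1-\epsilon)$, and then moves on to its own refinement (Theorems~\ref{main} and~\ref{4corr}) in that same restricted band. So there is no ``paper's own proof'' of the full Pair Correlation Conjecture to compare your proposal against.

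Your outline is an accurate sketch of Montgomery's original argument for the band-limited case, and you correctly identify the genuine obstruction: the evaluation of $F(\xi)$ for $|\xi|>1$ requires control of the off-diagonal prime correlations $\sum_n\Lambda(n)\Lambda(n+h)$, which is unavailable. Your final paragraph is exactly right---what you have described proves Montgomery's theorem, not the conjecture. The paper's own contribution is orthogonal to this: rather than trying to enlarge the support of $\hat f$, it keeps the restriction $|\alpha_1|+|\alpha_2|<2-2\epsilon$ and instead sharpens the error term from $o(1)$ to a power saving $O(T^{-\delta})$, detecting the Bogomolny--Keating lower-order terms $Q_T(u)$ via the smoothed explicit-formula computations in Lemmas~\ref{6}--\ref{11}.
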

\noindent More informally, for a random $\gamma'$ of height roughly $T$, the expected number of distinct $\gamma$ to lie in an interval $[\gamma' + \tfrac{2\pi\alpha}{\log T}\,,\,\gamma'+\tfrac{2\pi\beta}{\log T}]$ is $\int_\alpha^\beta 1-(\tfrac{\sin \pi u}{\pi u})^2 du$. Since the integrand is small when $u$ is small, there is very little chance that the distance between zeroes will be orders less than $1/\log T$, and in this sense zeroes repel one another.
\begin{figure}
  \centering
    \includegraphics{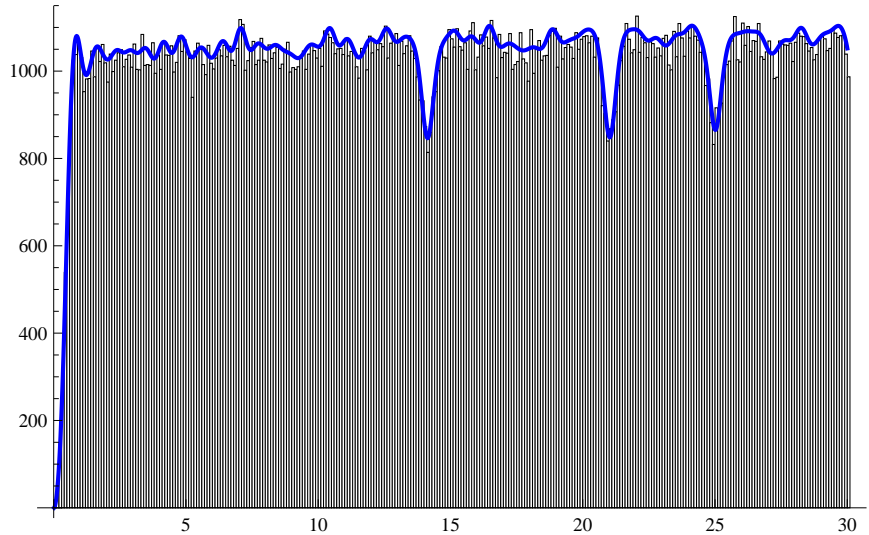}
  \caption{A histogram of $\gamma-\gamma'$ for the first 10,000 zeroes, counting the number of such differences to lie in intervals of size 0.1, compared to the appropriately scaled Bogomolny-Keating prediction in Theorem \ref{4corr}. Note that the smaller troughs around 14.13, 21.02, and 25.01 occur approximately at locations of zeroes themselves (although this pattern becomes less prominent around higher-lying zeroes; see the discussion at the end of section 4), and that the larger trough at the origin would, if stretched out by a factor of $\log T/2\pi$, resemble the GUE measure in Montgomery's conjecture, where $T$ is the number of zeroes included in the histogram. Generated with Mathematica.}
  \label{data}
\end{figure}
This conjecture Montgomery derived from a slightly stronger conjecture:
\begin{conj}[Strong Pair Correlation Conjecture]
\label{SPC}
For fixed $M$ and $w(u) = \tfrac{4}{4+u^2}$,
\begin{equation}
\label{strongpair}
\frac{2\pi}{T \log T} \sum_{0\leq \gamma, \gamma' \leq T} T^{i\alpha (\gamma-\gamma')} w(\gamma-\gamma') = 1 - (1-|\alpha|)_+ + o(1) + (1+o(1))T^{-2\alpha}\log T,
\end{equation}
uniformly for $\alpha \in [-M\,,\, M]$.
\end{conj}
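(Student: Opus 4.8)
We recognise the left side of \eqref{strongpair} as Montgomery's function
\[
F(\alpha)=F(\alpha,T)=\Big(\tfrac{T}{2\pi}\log T\Big)^{-1}\sum_{0\le\gamma,\gamma'\le T}T^{i\alpha(\gamma-\gamma')}\,w(\gamma-\gamma'),
\]
and the plan is to run his original argument as far as it reaches. Two reductions are free: relabelling $\gamma\leftrightarrow\gamma'$ shows $F(\alpha)=F(-\alpha)$, so it suffices to take $0\le\alpha\le M$; and writing $w(u)=\int_{-\infty}^\infty e^{-2|\tau|}e^{iu\tau}\,d\tau$ gives
\[
F(\alpha)=\Big(\tfrac{T}{2\pi}\log T\Big)^{-1}\int_{-\infty}^\infty e^{-2|\tau|}\,\Big|\sum_{0\le\gamma\le T}e^{i(\alpha\log T+\tau)\gamma}\Big|^2\,d\tau\ \ge\ 0,
\]
so $F$ is real and non-negative (a useful check on the final formula, and the positivity behind Montgomery's results on simple zeros).

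The crux is an explicit-formula identity, valid under RH. Put $x=T^{\alpha}$ and $s=\tfrac32+it$; since $\tfrac1{1+(t-\gamma)^2}=\big((s-\rho)(\bar s-1+\rho)\big)^{-1}$ and $x^{i\gamma}=x^{\rho-1/2}$ for $\rho=\tfrac12+i\gamma$, we have
\[
\sum_\gamma\frac{x^{i\gamma}}{1+(t-\gamma)^2}=\sum_\rho\frac{x^{\rho-1/2}}{(s-\rho)(\bar s-1+\rho)},
\]
a sum over zeros of $\zeta$ that I would evaluate by contour-integrating $-\tfrac{\zeta'}{\zeta}(z)\,x^{z-1/2}\big((s-z)(\bar s-1+z)\big)^{-1}$ around a tall rectangle. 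The interior poles ($z=\rho$, recovering the left side, and $z=1$, a negligible contribution) are balanced by the two vertical integrals: on the right edge, where $-\zeta'/\zeta=\sum_n\Lambda(n)n^{-z}$ and RH controls $\zeta'/\zeta$ in the strip between, this produces a Dirichlet polynomial $P_x(t)$ over $\Lambda(n)$ of effective length $x$; on the left edge, where the functional equation $\zeta(z)=\chi(z)\zeta(1-z)$ with $\tfrac{\chi'}{\chi}(z)\approx-\log\tfrac{\operatorname{Im}z}{2\pi}$ takes over, it produces a main term $M_x(t)$ of pointwise size $\asymp x^{-1}\log t$. The upshot is $\sum_\gamma\frac{x^{i\gamma}}{1+(t-\gamma)^2}=M_x(t)+P_x(t)+(\text{small})$, with $P_x(t)$ essentially $x^{-1}\sum_{n<x}\Lambda(n)\,n^{1/2}(x/n)^{it}$ plus a convergent tail from $n>x$.

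Squaring this identity and integrating over $t\in[0,T]$ then does everything. On the left, $\int_0^T\big((1+(t-\gamma)^2)(1+(t-\gamma')^2)\big)^{-1}\,dt$ is a fixed constant times $w(\gamma-\gamma')$ up to a boundary error, so $\int_0^T\big|\sum_\gamma\frac{x^{i\gamma}}{1+(t-\gamma)^2}\big|^2\,dt$ reproduces $F(\alpha)$ --- up to that constant and a factor $\tfrac{T\log T}{2\pi}$ --- with error $o(T\log T)$. On the right: $\int_0^T|M_x|^2\,dt\asymp Tx^{-2}(\log T)^2$ accounts, after normalisation, for exactly $(1+o(1))T^{-2\alpha}\log T$; the diagonal of $\int_0^T|P_x|^2\,dt$, evaluated via the prime number theorem in the form $\sum_{n\le x}\Lambda(n)^2 n\sim\tfrac12 x^2\log x$, yields $\alpha+o(1)$, which for $0\le\alpha\le1$ is $1-(1-|\alpha|)_++o(1)$; and the off-diagonal terms, together with the $M_xP_x$ cross term, are bounded crudely by $\int_0^T(m/n)^{it}\,dt\ll\min\!\big(T,|\log(m/n)|^{-1}\big)\ll\max(m,n)$. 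Collecting these, the total error is $o(T\log T)$ precisely when $x=T^{\alpha}$ with $\alpha\le1$, which establishes \eqref{strongpair} uniformly on $[-1,1]$.

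The genuine obstacle is the range $1<|\alpha|\le M$. There $x=T^{\alpha}>T$, the crude bound $\int_0^T(m/n)^{it}\,dt\ll\max(m,n)$ is far too lossy, and an honest evaluation of the off-diagonal sums demands the average over $h$ of $\sum_n\Lambda(n)\Lambda(n+h)$ --- that is, the Hardy--Littlewood prime-pair conjecture with its singular series $\mathfrak{S}(h)$. My plan there is to \emph{insert} the conjectured main term $\sum_h\mathfrak{S}(h)(\cdots)$ for those sums, carry out the then-routine computation, and verify that it collapses to the constant $1=1-(1-|\alpha|)_+$. This is a conditional reduction, not a proof: Montgomery's prediction $F(\alpha)\to1$ for $\alpha>1$ is in fact \emph{equivalent} to a strong averaged form of the prime-pair conjecture, so no unconditional argument of this shape can be expected, and this is the step at which a proof of the full \eqref{strongpair} must stand or fall.
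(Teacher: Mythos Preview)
The statement you are addressing is a \emph{conjecture}, and the paper does not offer (nor claim to offer) a proof of it. The paper simply records it as Conjecture~\ref{SPC}, notes that Montgomery established it only in the restricted range $|\alpha|\le 1-\epsilon$, and then moves on to its own refinements. There is therefore no ``paper's own proof'' to compare your attempt against.

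That said, your write-up is an accurate account of the situation. The argument you sketch for $|\alpha|\le 1$ is Montgomery's original approach (explicit formula applied to $\sum_\gamma x^{i\gamma}/(1+(t-\gamma)^2)$, mean-square over $t$, diagonal evaluated by the prime number theorem), which the paper cites but does not reproduce. And you correctly identify the genuine obstruction for $|\alpha|>1$: the off-diagonal contribution requires control of $\sum_n \Lambda(n)\Lambda(n+h)$ at Hardy--Littlewood strength, so that the full conjecture is essentially equivalent to an averaged prime-pair conjecture and is not expected to yield to unconditional methods. Your proposal is thus not a proof of Conjecture~\ref{SPC} --- nor could it be --- but a faithful outline of what is known and where the barrier lies, which is exactly the state of affairs the paper takes as its starting point.

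One small overclaim: you assert the argument ``establishes \eqref{strongpair} uniformly on $[-1,1]$,'' but the paper (following the usual statement) records Montgomery's range as $|\alpha|\le 1-\epsilon$; the endpoint $|\alpha|=1$ requires a bit more care with the off-diagonal error than your crude bound $\ll\max(m,n)$ provides.
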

\noindent Note that here $\gamma, \gamma'$ need not be distinct, and for $\alpha$ away from $0$, the term $T^{-2\alpha}\log T$ is unimportant.

For any $\epsilon > 0$, he was able to prove this for $M = 1-\epsilon$, and by integration in $\alpha$ he could prove Conjecture \ref{PCC} for $f$ with $\textrm{supp} \hat{f} \subset [-M\,,\,M]$. That this holds uniformly across $\alpha$ and that the weight function $w(\gamma-\gamma')$ collects $\gamma$ in the vicinity of $\gamma'$ numbering not just $O(1)$ but  $O(\log T)$  has a certain statistical significance which is not typically remarked upon but which extends beyond Conjecture \ref{PCC}.

We may draw out the point -- and motivate our computations that follow -- by putting the Strong Pair Correlation Conjecture in a slightly different form. Montgomery made use of the fact that the left hand side of \eqref{strongpair} is equal to
\begin{align*}
&\frac{4}{T\log T} \int_0^T \bigg|\sum_\gamma \frac{T^{i\alpha\gamma}}{1+(\gamma-t)^2}\bigg|^2\,dt\,+O\Big(\frac{\log ^2 T}{T}\Big) \\
&\;= \frac{4}{T\log T} \int_0^T \sum_{\gamma,\gamma'} \rho(\gamma-t)\rho(\gamma'-t)e\big(\alpha\tfrac{\log T}{2\pi}(\gamma-t) - \alpha \tfrac{\log T}{2\pi}(\gamma'-t)\big)\,dt + O\Big(\frac{\log ^2 T}{T}\Big),
\end{align*}
where $\rho(u) = 1/(1+u^2)$. The passage to the first line is made possible by knowing what is in effect the one-level density of Zeta zeroes, that $$
N(T):= \#\{\gamma\in (0,T)\} = \tfrac{T}{2\pi}\log\big(\tfrac{T}{2\pi}\big)-\tfrac{T}{2\pi}+O(\log T).
$$
The reader is referred to Montgomery's paper for details. On the other hand, recalling the Fourier pair,
$$
g(\nu):= \Big(\frac{\sin \pi \nu}{\pi \nu}\Big)^2
$$
$$
\hat{g}(x) = (1-|x|)_+,
$$
we have by Parseval,
\begin{align*}
&\int_{\mathbb{R}^2} \rho\big(\tfrac{2\pi \nu_1}{\log T}\big)\rho\big(\tfrac{2\pi \nu_1}{\log T}\big)e\big(\alpha\nu_1-\alpha\nu_2)\bigg[1+\delta(\nu_1-\nu_2)-\Big(\frac{\sin \pi (\nu_1-\nu_2)}{\pi (\nu_1-\nu_2)}\Big)^2 \bigg]d\nu_1d\nu_2\\
&\;=  \frac{\log ^2 T}{4\pi^2}\int_{\mathbb{R}^2} \hat{\rho}\big(-\tfrac{\log T}{2\pi}\xi_1\big) \hat{\rho}\big(-\tfrac{\log T}{2\pi}\xi_2\big)\delta(\xi_1+\xi_2) \big[1+\delta\big(\tfrac{\xi_1-\xi_2}{2}+\alpha\big)-\big(1-\big|\tfrac{\xi_1-\xi_2}{2}+\alpha\big|\big)_+\big]\,d\xi_1d\xi_2\\
&\;= \frac{\log^2 T}{4} \int_\mathbb{R} e^{-2\log T |\xi|}\big[1+\delta(\xi+\alpha)-(1-|\xi+\alpha|)_+\big]\,d\xi\\
&\;= \frac{\log^2 T}{4}\Big(T^{-2\alpha}+ \tfrac{1}{\log T}[1-(1-|\alpha|)_++o(1)]\Big)
\end{align*}
We may conclude that Montgomery's Strong Pair Correlation Conjecture is equivalent to the conjecture that
\begin{align*}
&\frac{1}{T}\int_0^T \sum_{\gamma, \gamma'} \rho(\gamma-t)\rho(\gamma'-t) e\big(\alpha\tfrac{\log T}{2\pi} (\gamma-t) - \alpha\tfrac{\log T}{2\pi}(\gamma'-t)\big)\,dt\\
&\; = (1+o(1))\int_{\mathbb{R}^2} \rho\big(\tfrac{2\pi\nu_1}{\log T}\big)\rho\big(\tfrac{2\pi\nu_2}{\log T}\big) e\big(\alpha(\nu_1-\nu_2)\big)\bigg[1+\delta(\nu_1-\nu_2)-\Big(\frac{\sin \pi (\nu_1-\nu_2)}{\pi (\nu_1-\nu_2)}\Big)^2\bigg]\,d\nu_1d\nu_2.
\end{align*}
Here the $\delta$ function corresponds with those terms on the left in which $\gamma=\gamma'$ and both can be removed accordingly. In fact, this is equivalent to the claim that
\begin{align}
\label{SPC2}
&\frac{1}{T}\int_0^T \sum_{\gamma\neq \gamma'} \rho(\gamma-t)\rho(\gamma'-t) e\big(\alpha_1\tfrac{\log T}{2\pi} (\gamma-t) + \alpha_2\tfrac{\log T}{2\pi}(\gamma'-t)\big)\,dt\notag\\
&\; = o(1) + (1+o(1))\int_{\mathbb{R}^2} \rho\big(\tfrac{2\pi\nu_1}{\log T}\big)\rho\big(\tfrac{2\pi\nu_2}{\log T}\big) e(\alpha_1\nu_1 + \alpha_2\nu_2)\bigg[1-\Big(\frac{\sin \pi (\nu_1-\nu_2)}{\pi (\nu_1-\nu_2)}\Big)^2\bigg]\,d\nu_1d\nu_2.
\end{align}
uniformly for $\alpha_1$ and $\alpha_2$ in a fixed bounded region. This can be shown by modifying the previous argument, although we leave the (somewhat tedious and secondary) details to the reader.

Said somewhat more informally: because we may integrate in $\alpha_1$ and $\alpha_2$, the measures
\begin{equation}
\label{zetameas}
\bigg[\frac{1}{T}\int_0^T\sum_{\gamma\neq\gamma'} \delta\big(\nu_1-\tfrac{\log T}{2\pi}(\gamma-t)\big)\delta\big(\nu_2-\tfrac{\log T}{2\pi}(\gamma'-t)\big)\bigg]\rho\big(\tfrac{2\pi\nu_1}{\log T}\big)\rho\big(\tfrac{2\pi\nu_2}{\log T}\big)\,d\nu_1d\nu_2,
\end{equation}
and
\begin{equation}
\label{GUEmeas}
\bigg[1-\Big(\frac{\sin \pi (\nu_1-\nu_2)}{\pi (\nu_1-\nu_2)}\Big)^2\bigg]\rho\big(\tfrac{2\pi\nu_1}{\log T}\big)\rho\big(\tfrac{2\pi\nu_2}{\log T}\big)\,d\nu_1d\nu_2,
\end{equation}
are asymptotically indistinguishable with respect to test functions that have a fixed compact Fourier support and therefore do not concentrate themselves too narrowly. This latter measure, of course, is the limiting pair correlation measure associated to the Gaussian Unitary Ensemble (GUE).

There is nothing special about our use of the function $\rho$ here. Its placement serves only to cutoff measures \eqref{zetameas} and \eqref{GUEmeas} so they are (close to being) supported in a square region with dimensions of order $\log T$. This is an important feature of Montgomery's conjecture -- outside of this region the random-matrix-theory measure given by \eqref{GUEmeas} ceases to be as good an approximation to \eqref{zetameas}.

Even inside this region -- to which we will restrict our attention in this paper -- the measure \eqref{GUEmeas} misses important phenomena that will be important if we are to achieve a stronger estimate than \eqref{SPC2}. These are the troughs near low-lying zeroes in histograms of $\gamma-\gamma'$, seen in Figure \ref{data}.

Because the 1-level density of zeroes near $T$ is not stationary but grows like $\tfrac{\log(T/2\pi)}{2\pi}$, measure \eqref{zetameas} will even more closely resemble measure \eqref{GUEmeas} if the average from $0$ to $T$ is replaced by an average from $T$ to $T+H$ for $H = o(T)$. We will do so in the sequel, and in addition, for technical reasons, we will use smoothed averages; we replace
$$
\frac{1}{H}\int_T^{T+H}\cdot\cdot\cdot\,dt = \frac{1}{H}\int_\mathbb{R} \mathbf{1}_{[0,1]}\Big(\tfrac{t-T}{H}\Big)\cdot\cdot\cdot \,dt
$$
by
$$
\frac{1}{H}\int_\mathbb{R}\sigma\Big(\tfrac{t-T}{H}\Big)\cdot\cdot\cdot\,dt,
$$
where $\sigma$ is some smooth bump function with mass $1$. Making use of such smoothed averages makes the computations that follows easier than they would otherwise be.

We prove
\begin{thm}
\label{main}
For fixed $\sigma$ and $h$ with $\hat{\sigma}, \hat{h}$ smooth and compactly supported, and $\sigma$ of mass 1; fixed $\epsilon, \kappa > 0$; $L$ within a bounded distance $\kappa$ of $\log T$; and $H \leq T$,
\begin{align}
\label{mainn}
&\frac{1}{H} \int_\mathbb{R} \sigma\Big(\tfrac{t-T}{H}\Big)\sum_{\gamma\neq \gamma'} h(\gamma-t,\gamma'-t) e\big(\alpha_1\tfrac{L}{2\pi}(\gamma-t)+\alpha_2 \tfrac{L}{2\pi}(\gamma'-t)\big) \,dt \notag\\
&\;= O\Big(\frac{T^{|\tfrac{\alpha_1}{2}|+|\tfrac{\alpha_2}{2}|}}{H}\Big) + O\Big(\log T\Big(\frac{H}{T}+\frac{1}{H}\Big)\Big)  \\
&\;\;+ \int_{-\infty}^\infty \int_{-\infty}^\infty \bigg[\Big(\frac{\log(T/2\pi)}{2\pi}\Big)^2 + Q_T(\nu_1-\nu_2)\bigg] h(\nu_1,\nu_2)e\big(\alpha_1\tfrac{L}{2\pi}\nu_1+\alpha_2 \tfrac{L}{2\pi}\nu_2\big) \,d\nu_1 d\nu_2 \notag
\end{align}
uniformly for $(|\alpha_1|+ |\alpha_2|)/2 \leq 1-\epsilon.$

Here
\begin{align*}
Q_t(u) := &\frac{1}{4\pi^2}\bigg(\Big(\frac{\zeta'}{\zeta}\Big)'(1+iu) - B(iu) + \Big(\frac{\zeta'}{\zeta}\Big)'(1-iu) - B(-iu) \\&\;+ \big(\frac{t}{2\pi}\big)^{-iu}\zeta(1-iu)\zeta(1+iu)A(iu) + \big(\frac{t}{2\pi}\big)^{iu}\zeta(1+iu)\zeta(1-iu)A(-iu)\bigg),
\end{align*}
defined by continuity at $u=0$, and
$$
A(s) := \prod_p \frac{(1-\tfrac{1}{p^{1+s}})(1-\tfrac{2}{p}+\tfrac{1}{p^{1+s}})}{\big(1-\tfrac{1}{p}\big)^2} = \prod_p \bigg(1-\frac{(1-p^{-s})^2}{(p-1)^2}\bigg) = 1+O(s^2),
$$
and
$$
B(s) := \sum_p \frac{\log ^2 p}{(p^{1+s}-1)^2}.
$$
\end{thm}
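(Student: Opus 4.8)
The plan is to make the Bogomolny--Keating heuristic quantitative by running the explicit formula twice. Since $\hat h$ is smooth and compactly supported, a Fourier expansion on a box writes $h(\nu_1,\nu_2)=\sum_{j,k}c_{jk}\,\phi_j(\nu_1)\psi_k(\nu_2)$ with the $\phi_j,\psi_k$ Schwartz of fixed compact Fourier support and $(c_{jk})$ rapidly decaying, so by bilinearity it suffices to treat $h=\phi\otimes\psi$. For such $h$ the sum over $\gamma\neq\gamma'$ equals $Z_\phi(t)Z_\psi(t)-Z_{\phi\psi}(t)$, where $Z_\phi(t)=\sum_\gamma\phi(\gamma-t)e(\alpha_1\tfrac{L}{2\pi}(\gamma-t))$, $Z_\psi$ is the analogue with $\psi,\alpha_2$, and $Z_{\phi\psi}$ the analogue with test function $\phi\psi$ and frequency $\alpha_1+\alpha_2$ (the $\gamma=\gamma'$ diagonal). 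To each single sum I apply the explicit formula --- equivalently, integrate $\zeta'/\zeta$ around a rectangle and push the contour to $\mathrm{Re}\,s=1+\delta$ and $\mathrm{Re}\,s=-\delta$, using the functional equation on the second line --- obtaining $Z_\phi(t)=M_\phi(t)-P_\phi(t)+R_\phi(t)$, with $M_\phi(t)=\tfrac1{2\pi}\int\phi(v)e(\alpha_1\tfrac{L}{2\pi}v)\,\Omega(v+t)\,dv$ the archimedean density term ($\Omega(r)=\mathrm{Re}\,\tfrac{\Gamma'}{\Gamma}(\tfrac14+\tfrac{ir}2)-\log\pi=\log\tfrac{|r|}{2\pi}+O(1/|r|)$), $P_\phi(t)=\tfrac1{2\pi}\sum_{n\geq2}\tfrac{\Lambda(n)}{\sqrt n}\big(n^{-it}\hat\phi(\tfrac{\log n}{2\pi}-\alpha_1\tfrac{L}{2\pi})+n^{it}\hat\phi(-\tfrac{\log n}{2\pi}-\alpha_1\tfrac{L}{2\pi})\big)$ --- in which the compact support of $\hat\phi$ confines $n$ to $n\asymp T^{|\alpha_1|}$ and kills one of the two signs according to $\mathrm{sgn}(\alpha_1)$ --- and $R_\phi(t)=O(1)$ from the poles at $s=0,1$ and the $\Gamma$-factor.

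Then I multiply out $Z_\phi Z_\psi$, subtract $Z_{\phi\psi}$, and apply the smoothed average $\tfrac1H\int\sigma(\tfrac{t-T}{H})(\cdot)\,dt$ term by term. The term $M_\phi M_\psi$ gives the main term: since $\Omega(v+t)=\log\tfrac{t}{2\pi}+O(\tfrac{|v|+1}{t})$ and $\log\tfrac{t}{2\pi}$ varies by $O(H/T)$ across the window, its average is $\big(\tfrac{\log(T/2\pi)}{2\pi}\big)^2\int\!\!\int h(\nu_1,\nu_2)e(\alpha_1\tfrac{L}{2\pi}\nu_1+\alpha_2\tfrac{L}{2\pi}\nu_2)\,d\nu_1 d\nu_2+O(\log T\cdot H/T)$. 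In every remaining term that contains a factor with a prime power $n^{\pm it}$, resp.\ a product $(mn)^{\pm it}$ --- the cross terms $M_\phi P_\psi$, $P_\phi M_\psi$, the $R$-terms, and the pieces of $P_\phi P_\psi$ of shape $(mn)^{\pm it}$ --- the smoothed average converts that factor into $\hat\sigma(\pm\tfrac{H\log n}{2\pi})$, resp.\ $\hat\sigma(\pm\tfrac{H\log(mn)}{2\pi})$, which by the Schwartz decay of $\hat\sigma$ is $\ll_A(H\log 2)^{-A}$; summing $\Lambda(\cdot)/\sqrt{\cdot}$ over the short ranges $n\asymp T^{|\alpha_i|}$ then bounds all of these by $O(T^{|\alpha_1/2|+|\alpha_2/2|}/H)+O(\tfrac{\log T}{H})$, the two error terms in the statement.

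What survives is the piece of $P_\phi P_\psi$ of shape $(m/n)^{\pm it}$, which is nonzero only when $\alpha_1,\alpha_2$ have opposite signs (say $\alpha_1\geq0\geq\alpha_2$) and, once averaged, only when $m\asymp n$, forcing $\alpha_1+\alpha_2=O(1/\log T)$; there the average leaves
\[
\tfrac1{4\pi^2}\sum_{m,n}\tfrac{\Lambda(m)\Lambda(n)}{\sqrt{mn}}\,(m/n)^{-iT}\,\hat\sigma\big(\tfrac{H\log(m/n)}{2\pi}\big)\,\hat\phi\big(\tfrac{\log m}{2\pi}-\alpha_1\tfrac{L}{2\pi}\big)\hat\psi\big(-\tfrac{\log n}{2\pi}-\alpha_2\tfrac{L}{2\pi}\big),
\]
split into $m=n$ and $m\neq n$. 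The diagonal $m=n$ (with $\hat\sigma(0)=1$), evaluated by Mellin inversion from $\sum_m\Lambda(m)^2m^{-1-s}=(\zeta'/\zeta)'(1+s)-B(s)$, produces the $\tfrac1{4\pi^2}\int\!\!\int\big[(\zeta'/\zeta)'(1+iu)-B(iu)+(\zeta'/\zeta)'(1-iu)-B(-iu)\big]h\,e(\cdots)$ part of $Q_T$, the discrepancies between $L$ and $\log(T/2\pi)$ (using $|L-\log T|\leq\kappa$) and between $\log m$ and $\log(m/n)$ going into the error. The off-diagonal $m\neq n$ is, in this range, itself an error term: restricting $m-n=r$ to $|r|\lesssim T^{|\alpha_1|}/H$ and using the Brun--Selberg upper-bound sieve $\#\{q\leq Q:\,q,q+r\text{ both prime}\}\ll\mathfrak S(r)Q/\log^2Q$ together with $\sum_{|r|\leq Y}\mathfrak S(r)\ll Y$ (prime powers contributing less), one gets a bound $O(T^{|\alpha_1/2|+|\alpha_2/2|}/H)$ --- so, pleasantly, no Hardy--Littlewood asymptotic is needed.

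It remains to handle the subtracted $\gamma=\gamma'$ sum $Z_{\phi\psi}$ and to produce the $(t/2\pi)^{\pm iu}\zeta(1\pm iu)\zeta(1\mp iu)A(\pm iu)$ part of $Q_T$. Since the relevant frequencies again force $\alpha_1+\alpha_2=O(1/\log T)$, the prime part of $Z_{\phi\psi}$ is supported on $n=O(1)$ and $\sigma$-averages to $\ll H^{-A}$; what is left is its archimedean piece, $-\tfrac{\log(T/2\pi)}{2\pi}\int h(\nu,\nu)e((\alpha_1+\alpha_2)\tfrac{L}{2\pi}\nu)\,d\nu$, and one must show that this --- together with the diagonal-prime term and the residual errors --- reconstructs $\tfrac1{4\pi^2}\int\!\!\int\big[(T/2\pi)^{\pm iu}\zeta(1\pm iu)\zeta(1\mp iu)A(\pm iu)\big]h\,e(\cdots)$. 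The mechanism is that $\zeta(1+iu)\zeta(1-iu)$ has a double pole at $u=0$ which, paired against a smooth weight and the oscillating factor $(T/2\pi)^{\pm iu}$, contributes a term linear in $\log(T/2\pi)$ (the classical $\int\frac{1-\cos(aw)}{w^2}\,dw=\pi|a|$), with $A(\pm iu)=1+O(u^2)$ inert at leading order and combining with the archimedean expansion above so that the singular parts cancel and $Q_T$ emerges (the finiteness of $Q_t$ at $u=0$ is exactly this cancellation). I expect this last step --- carrying out the Bogomolny--Keating identification $Q_t\leftrightarrow$ (archimedean $+$ diagonal-prime contributions) precisely enough that the $\kappa$- and $O(1)$-level pieces on the two sides agree within the stated errors --- to be the main obstacle; by contrast the explicit formula, the oscillation bounds from the Schwartz decay of $\hat\sigma$, and the Brun--Selberg estimate on the off-diagonal are all routine.
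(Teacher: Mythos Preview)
Your outline tracks the paper's architecture closely --- explicit formula twice, split into archimedean density, prime sums, and remainders, then $\sigma$-average --- and through the point where the $(m/n)^{\pm it}$ piece of $P_\phi P_\psi$ survives you are essentially doing what the paper does (its Lemma~3.1). Two remarks on what you do differently, and one genuine gap.

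\textbf{The off-diagonal $m\neq n$.} Your sieve bound is correct but unnecessary. You are not fully exploiting that $\hat\sigma$ is \emph{compactly} supported, not merely Schwartz: since $m,n\lesssim T^{(|\alpha_1|+|\alpha_2|)/2}$ by the support of $\hat\phi,\hat\psi$, one has $|\log(m/n)|\gtrsim 1/\sqrt{mn}\gtrsim T^{-(|\alpha_1|+|\alpha_2|)/2}$ whenever $m\neq n$, so $\hat\sigma(H\log(m/n)/2\pi)=0$ identically once $H/T^{(|\alpha_1|+|\alpha_2|)/2}$ exceeds a constant depending only on $\operatorname{supp}\hat\sigma$. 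For smaller $H$ the stated error term $O(T^{|\alpha_1/2|+|\alpha_2/2|}/H)$ is $\gtrsim 1$ and absorbs everything. No Brun--Selberg is needed.

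\textbf{The gap: producing the $\zeta\zeta A$ terms.} You correctly flag this as the main obstacle, but your proposed mechanism is misdirected. Your ``Mellin inversion'' of the diagonal sum $\sum_m\Lambda^2(m)m^{-1}g(\log m)$ cannot simply yield $\int[(\zeta'/\zeta)'(1+iu)-B(iu)]\hat g(u)\,du$: that integral diverges at $u=0$ (double pole), and the regularization is exactly where the missing pieces live. The paper handles this (its Lemma~3.2) by a \emph{second} application of the explicit formula to the weight $|x|\,e^{-|x|/2}$, together with the band-limited identity $|x|=\lambda\big[1-(1-|x/\lambda|)_+\big]$ valid on the support of $\hat r(\,\cdot-\alpha_1L)\hat r(-\,\cdot-\alpha_2L)$. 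This produces, besides the $(\zeta'/\zeta)'$ terms, a $\lambda\,\delta(\nu_1-\nu_2)$ term and oscillatory terms $e(\pm\lambda(\nu_1-\nu_2))/(2\pi(\nu_1-\nu_2))^2$ with $\lambda=\log(T/2\pi)$. The $\lambda\delta$ is what cancels your archimedean $Z_{\phi\psi}$ --- not the $\zeta\zeta A$ piece. The $\zeta\zeta A$ piece is instead matched to the oscillatory proxy $e(\pm\lambda u)/(2\pi u)^2$ by a \emph{separate} lemma (the paper's Lemma~3.3): one shows directly that
\[
\int f(u)\,e(-Pu)\Big[\zeta(1-i2\pi u)\zeta(1+i2\pi u)A(i2\pi u)-\tfrac{1}{4\pi^2 u^2}\Big]\,du \;=\; O_f(e^{-P})
\]
for band-limited $f$ and $P\geq 0$, by writing $\zeta(1+s)\zeta(1-s)/(1-s^2)-1/s^2$ as an explicit double integral over $\lfloor\cdot\rfloor$-kernels and reading off Fourier support. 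So the $\zeta\zeta A$ terms are not ``reconstructed'' from anything on the prime side; they are shown to be interchangeable with the proxy $e(\pm\lambda u)/u^2$ under the band-limit $(|\alpha_1|+|\alpha_2|)/2\leq 1-\epsilon$, and it is the proxy that emerges naturally from the regularized diagonal sum. Your double-pole/$\int(1-\cos aw)w^{-2}\,dw$ intuition is pointing at the $\lambda\delta$ cancellation, not at the $\zeta\zeta A$ identification, and without the proxy lemma the argument does not close.
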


\textit{Remark:} One may, of course, optimize by setting $H = T^{1-\epsilon/2}/\sqrt{\log T}$. The reason we have written our error terms in this manner is that the second error term $O(\log T(H/T+1/H))$ is somewhat artificial. This will become clear in the proof; at the cost of a somewhat more baroque result it could be eliminated. The remaining error term is somewhat more fundamental -- and at any rate both effectively amount to an inverse power-of-$T$ error term.

\textit{Remark:} Here and in what follows we adopt the convention of counting zeroes with multiplicity, so that in particular for a function $f$, a sum
$$
\sum_{\gamma\neq \gamma'} f(\gamma,\gamma')
$$
is really
$$
\sum_{\gamma, \gamma'} f(\gamma,\gamma') - \sum_{\gamma} f(\gamma,\gamma)
$$
In all likelihood every zero of the zeta function occurs with multiplicity $1$, but provided we adopt this notational convention we do not need to assume so.

\textit{Remark:} This theorem is of interest mainly in the case that $\alpha_1 = -\alpha_2$, as in Conjecture \ref{SPC}. We do not specialize to this case only because in the computation of higher order correlation functions such a specialization ceases to be as natural.

A pair correlation function of this form was first conjectured by Bogomolny and Keating in \cite{BogKeat}, on part on an analogy from the field of quantum chaos. Further support for this form was offered by Conrey and Snaith \cite{ConSnaith}, who showed that it can be derived from the ratio conjecture of Conrey, Farmer, and Zirnbauer \cite{CFZ}. It bears remarking that, in the semiclassical language, we shall only really see the diagonal terms $(\zeta'/\zeta)'(1+iu) - B(iu)$ and conjugate because of the restrictions we place on $\alpha_1$ and $\alpha_2$. Indeed, apart from some analytic devices which mimic the large sieve, we recover these terms in much the same fashion as Bogomolny and Keating. There are no new arithmetical ideas required; to rigorously extend the range in which $\alpha_1, \alpha_2$ lie and effectively see the off-diagonal terms that remain is more difficult and will almost certainly require a breakthrough.

By re-averaging Theorem \ref{main} in $T$, and using a 1-level density estimate as before, we show that
\begin{thm}
\label{4corr}
For fixed $\epsilon > 0$ and fixed $\omega$ with a smooth and compactly supported Fourier transform,
\begin{align}
\label{4correq}
&\frac{1}{T} \sum_{0 < \gamma \neq \gamma' \leq T} \omega(\gamma-\gamma')e\big(\alpha \tfrac{\log T}{2\pi}(\gamma-\gamma')\big)\\
&\;=O_\delta\Big(\frac{1}{T^\delta}\Big)+\int_\mathbb{R} \omega(u)e\big(\alpha \tfrac{\log T}{2\pi}u\big)\bigg[\frac{1}{T}\int_0^T \Big(\frac{\log(t/2\pi)}{2\pi}\Big)^2 + Q_t(u)\,dt\bigg]\,du \notag
\end{align}
for any $\delta < \epsilon/2$, uniformly for $|\alpha|< 1-\epsilon$.
\end{thm}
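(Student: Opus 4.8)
The plan is to deduce Theorem~\ref{4corr} directly from Theorem~\ref{main}: all the genuinely arithmetic work sits in the latter, and what remains is to assemble the sum over zeroes up to height $T$ out of the width-$H$ local statistics of Theorem~\ref{main}, while tracking the frequency $\alpha\tfrac{\log T}{2\pi}$ at which we probe. First I would specialise. Fix once and for all a Schwartz $\psi$ of mass $1$ with $\hat\psi$ smooth and compactly supported, and in \eqref{mainn} take $\alpha_1=-\alpha_2=\alpha'$ and $h(\nu_1,\nu_2)=\omega(\nu_1-\nu_2)\psi(\nu_1)$. Then $\hat h(\xi_1,\xi_2)=\hat\psi(\xi_1+\xi_2)\hat\omega(-\xi_2)$ is smooth and compactly supported, so $h$ is admissible; the left side of \eqref{mainn} reduces to $\tfrac1H\int_\mathbb{R}\sigma(\tfrac{t-\tau}{H})\sum_{\gamma\neq\gamma'}\omega(\gamma-\gamma')\psi(\gamma-t)\,e\big(\alpha'\tfrac{L}{2\pi}(\gamma-\gamma')\big)\,dt$ (I write $\tau$ for the centre, reserving $T$ for the summation cutoff of Theorem~\ref{4corr}); and, after the substitution $u=\nu_1-\nu_2,\ v=\nu_1$ and using $\int\psi=1$, the main term of \eqref{mainn} collapses to $\int_\mathbb{R}\omega(u)\,e\big(\alpha'\tfrac{L}{2\pi}u\big)\big[(\tfrac{\log(\tau/2\pi)}{2\pi})^2+Q_\tau(u)\big]\,du$.

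The crux is a decomposition reconciling the probing frequency. Split the sum of Theorem~\ref{4corr} into $0<\gamma\le T^{1-\epsilon/2}$ and $T^{1-\epsilon/2}<\gamma\le T$. The first part is $O(T^{1-\epsilon/2}(\log T)^2)$ by $\|\omega\|_\infty$ and the crude $1$-level density (which bounds the number of admissible pairs), and the matching $\int_0^{T^{1-\epsilon/2}}$ piece of the right side of \eqref{4correq} is bounded the same way, using that $Q_t(u)=O_\omega(1)$ uniformly in $t$ (it is continuous at $u=0$ by construction); after division by $T$ both lie in $O_\delta(T^{-\delta})$ for $\delta<\epsilon/2$. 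On $(T^{1-\epsilon/2},T]$ I would take $O_\epsilon(\log T)$ pieces $I_k=(a_k,b_k]$ on each of which $\log$ varies by at most $\kappa$, and set $L_k=\log a_k$ and $\alpha'_k=\alpha\tfrac{\log T}{L_k}$, so that $\alpha'_k\tfrac{L_k}{2\pi}=\alpha\tfrac{\log T}{2\pi}$ matches the target oscillation exactly. The whole point of this choice: for $\tau\in I_k$ we have $L_k\ge(1-\epsilon/2)\log T$, hence $|\alpha'_k|\le\tfrac{1-\epsilon}{1-\epsilon/2}=1-\eta_0$ for a fixed $\eta_0>0$, so Theorem~\ref{main} applies (with its $\epsilon$ taken to be $\eta_0$) at centre $\tau\in I_k$, scale $L_k$, and any width $H\le T^{1-\epsilon/2}$.

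Now integrate \eqref{mainn} (with parameters $L_k,\alpha'_k$) over the centre $\tau\in I_k$ and sum over $k$. On the left, since $\alpha'_k\tfrac{L_k}{2\pi}(\gamma-\gamma')=\alpha\tfrac{\log T}{2\pi}(\gamma-\gamma')$ for every $k$, the $\gamma$-sum is $k$-free, so we obtain $\int_\mathbb{R} g(t)\big(\sum_k w_k(t)\big)\,dt$ with $g(t)=\sum_{\gamma\neq\gamma'}\omega(\gamma-\gamma')\psi(\gamma-t)\,e(\alpha\tfrac{\log T}{2\pi}(\gamma-\gamma'))$ and $w_k(t)=\tfrac1H\int_{a_k}^{b_k}\sigma(\tfrac{t-\tau}{H})\,d\tau$; because $\sigma$ has mass $1$ the $w_k$ telescope across consecutive pieces to a smoothed indicator of $[T^{1-\epsilon/2},T]$ that departs from the sharp indicator only on two windows of width $O(H)$, where $|g|=O((\log T)^2)$, so $\int_{T^{1-\epsilon/2}}^{T}g=\sum_{T^{1-\epsilon/2}<\gamma\le T,\ \gamma'\neq\gamma}\omega(\gamma-\gamma')\,e(\alpha\tfrac{\log T}{2\pi}(\gamma-\gamma'))+O(\log T)$ once the $\int\psi$ tails near the two endpoints are accounted for. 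On the right the main terms, being $k$-free once $\alpha'_kL_k=\alpha\log T$, reassemble to $\int_{T^{1-\epsilon/2}}^{T}\!\int_\mathbb{R}\omega(u)\,e(\alpha\tfrac{\log T}{2\pi}u)\big[(\tfrac{\log(\tau/2\pi)}{2\pi})^2+Q_\tau(u)\big]\,du\,d\tau$, to which we re-attach the negligible $[0,T^{1-\epsilon/2}]$ piece. The error terms of \eqref{mainn}, integrated and summed, contribute $O(T^{1+|\alpha|}/H)$ — here the gain is that $\tau^{|\alpha'_k|}=\tau^{|\alpha|\log T/L_k}\asymp T^{|\alpha|}$ uniformly on $I_k$ — together with $O(H(\log T)^{O(1)}+T\log T/H)$ from the second error term and the telescoping and endpoint corrections. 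Dividing by $T$ and choosing $H\asymp T^{(1+|\alpha|)/2}$ (necessarily below $T^{1-\epsilon/2}$ and below the piece-widths, which $|\alpha|<1-\epsilon$ permits) renders every error $O(T^{-(1-|\alpha|)/2}(\log T)^{O(1)})$; with the earlier $O(T^{-\epsilon/2}(\log T)^2)$ tail this is $O_\delta(T^{-\delta})$ for any $\delta<\epsilon/2$, uniformly in $\alpha$, which is \eqref{4correq}.

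The main obstacle is exactly this mismatch of scales, and it is why the loss is a factor $T^{\epsilon/2}$: a zero at height $\tau\ll T$ is probed by $e(\alpha\tfrac{\log T}{2\pi}(\gamma-\gamma'))$ at effective frequency $\alpha\log T/\log\tau$ in its own natural units, and this leaves Montgomery's range once $\tau$ drops below roughly $T^{(1-\epsilon)/(1-\epsilon/2)}$; Theorem~\ref{main} is then silent and one is forced simply to discard those zeroes — affordable only because they number $O(T^{1-\epsilon/2}\log T)$. Everything else is bookkeeping: the admissibility of $h$, the cancellation in the telescoping of the $\sigma$-averages, the uniformity of the implied constants of Theorem~\ref{main} as $(L_k,\alpha'_k)$ range over the pieces, and the elementary zero-counting (a weak form of the $1$-level density) behind the endpoint corrections.
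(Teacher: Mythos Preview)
Your proposal is correct and, in one respect, more careful than the paper's own argument. Both proofs specialise $h$ so that the left side of \eqref{mainn} collapses to a weighted version of the target sum, then average the centre to rebuild the sharp cutoff at height $T$ (the paper's $R$), controlling the boundary by the Schwartz decay of the weights and the crude one-level density. The substantive difference is in how the probing frequency $\alpha\tfrac{\log T}{2\pi}$ is reconciled with the local scale. The paper simply fixes $L$ (implicitly $L=\log R$) and integrates the centre over all of $[0,R]$; for small centres this lies outside the stated hypothesis $|L-\log T|\le\kappa$ of Theorem~\ref{main}, and the argument leans tacitly on the fact that the underlying Lemma~\ref{7} only needs the weaker inequality $\lambda>(1-\epsilon)L+O(1)$, together with the trivial $O((\log T)^2)$ bound on both sides of \eqref{mainn} for the residual small centres. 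Your decomposition into logarithmically short pieces $I_k$ with adapted pairs $(L_k,\alpha'_k)$ satisfying $\alpha'_kL_k=\alpha\log T$ exactly keeps you honestly inside the hypotheses of Theorem~\ref{main} at every centre, at the cost of tracking $O_\kappa(\log T)$ pieces and checking that the implied constants are uniform in $k$ --- which, as you note, follows because $h,\sigma,\kappa$ and $\eta_0$ are all fixed. The paper's route is shorter; yours is a cleaner black-box reduction to Theorem~\ref{main} as stated. Two cosmetic remarks: the paper anchors $h$ to the midpoint $(\nu_1+\nu_2)/2$ via an auxiliary $\eta$ rather than to $\nu_1$ via your $\psi$, which is immaterial since $\omega$ Schwartz forces $\gamma-\gamma'=O(1)$; and your endpoint correction ``$O(\log T)$'' should read $O((\log T)^2)$, though this is harmless after division by $T$.
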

In fact, by proceeding carefully in the analysis that follows, one can show this even for Montgomery's choice of test function $\omega(u) = w(u)$, owing to the rapid decrease of this function's Fourier transform. We leave this refined computation to the reader. That the integral on the right extends over all of $\mathbb{R}$, rather than simply $[-T,T]$ may seem surprising since from the left hand side we inherit only information about $\omega$ in the latter interval, but the decay of $\omega$ is in all cases sufficient that the difference between these two regions of integration is absorbed into the error term.

The sum over zeroes in \eqref{4correq} is of the same form as that in the Strong Pair Correlation Conjecture's \eqref{strongpair}. (Should we push our analysis to include $\omega = w$, they would be the same sum.) Our result therefore refines Montgomery's original work. Indeed, one can show that the difference between this prediction and a somewhat naive extension of the GUE prediction in which
$$
Q_t(u)
$$
has been replaced by
$$
K_t(u) = -\Big(\frac{\log(t/2\pi)}{2\pi}\Big)^2 K\Big(\frac{\log(t/2\pi)}{2\pi}u\Big),
$$
where $K(u) = \big(\tfrac{\sin \pi u}{\pi u}\big)^2$, is larger than the error term. We conclude our paper with a demonstration of the difference between these two predictions.

For $\alpha$ not restricted so stringently: there is no reason not to believe that for any fixed compact region $M$, Theorem \ref{4corr} is true for all $\alpha\in M$ for any $\delta < 1/2$.

\textit{Remark:} By integrating in $\alpha$ in Theorem \ref{4corr}, one can see the statistics
$$
\sum_{0 < \gamma \neq \gamma' \leq T} f\big(\tfrac{\log T}{2\pi}(\gamma-\gamma'-s)\big) \omega(\gamma-\gamma'),
$$
to a uniform error of $O(\|\hat{f}\|_{L^1}/T^\delta)$ for any $s$, where $f$ is supported in $[-1+\epsilon,1-\epsilon]$ and $\delta < \epsilon /2$. This is the precise sense in which one can see the troughs in the pair correlation measure. If one wants these statistics only for fixed $s$, without uniformity, the computations that follow can be simplified slightly.

\textit{Remark:} The above statistics say nothing about the case that our macroscopic interval grows. For instance, they say nothing about the sums
\begin{equation}
\label{expandingmacro}
\frac{1}{T}\sum_{0 < \gamma \neq \gamma' \leq T} \omega\Big(\frac{\gamma-\gamma'}{T^\nu}\Big)e\big(\alpha \tfrac{\log T}{2\pi}(\gamma-\gamma')\big)
\end{equation}
when $0 < \nu < 1$. The statistics over such intervals would be expected to deviate from GUE statistics in the same way as above, and bear relevance to the variance statistics first obtained by Fujii \cite{Fu} for the number of zeroes lying in an interval $[t,t+h]$, where $h\rightarrow\infty$ as $t\rightarrow\infty$.

In fact, using the method below, one can evaluate the sums \eqref{expandingmacro} to a high degree of accuracy, at the cost of imposing additional band-limits on the Fourier variable $\alpha$, beyond those of Theorems \ref{main} and \ref{4corr}. However, we do not pursue the matter here.

\section{The explicit formula and an outline of a proof}

Our main tool in establishing this will be the well known explicit formula relating the zeroes of the Zeta function to the primes. We define
$$
\Omega(\xi) := \tfrac{1}{2}\tfrac{\Gamma'}{\Gamma}\big(\tfrac{1}{4}+i\tfrac{\xi}{2}\big) +\tfrac{1}{2}\tfrac{\Gamma'}{\Gamma}\big(\tfrac{1}{4}-i\tfrac{\xi}{2}\big) - \log \pi.
$$

\begin{thm}
[The explicit formula]
Let $g$ a measurable function such that $g(x) = \tfrac{g(x+)+g(x-)}{2}$, and for some $\delta > 0$,
$$
(a)\quad \int_{-\infty}^\infty e^{(\tfrac{1}{2}+\delta)|x|}|g(x)|dx < +\infty,
$$
$$
(b)\quad \int_{-\infty}^\infty e^{(\tfrac{1}{2}+\delta)|x|}|dg(x)| < +\infty.
$$
Then we have
$$
\lim_{L\rightarrow\infty}\sum_{|\gamma|<L}\hat{g}\big(\tfrac{\gamma}{2\pi}\big)- \int_{-L}^L \frac{\Omega(\xi)}{2\pi}\hat{g}\big(\tfrac{\xi}{2\pi}\big)\,d\xi = \int_{-\infty}^\infty [g(x)+g(-x)]e^{-x/2}\,d\big(e^x-\psi(e^x)\big),
$$
where here $\psi(x) = \sum_{n\leq x} \Lambda(n)$, for the von Mangoldt function $\Lambda$.
\end{thm}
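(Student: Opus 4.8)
This is a form of Weil's explicit formula, and the natural route is to integrate the logarithmic derivative of $\zeta$ (corrected by its Archimedean factor) against the two-sided Laplace transform of $g$. Set $G(s):=\int_{-\infty}^\infty g(x)e^{(s-1/2)x}\,dx$. Hypothesis $(a)$ makes $G$ holomorphic in the strip $-\delta<\Re s<1+\delta$; integrating by parts once — legitimate because $(b)$ forces $g$ to have bounded variation with $g(\pm\infty)=0$ — gives $G(s)=-\tfrac1{s-1/2}\int e^{(s-1/2)x}\,dg(x)$, so $G(s)=O(1/|s|)$ uniformly on vertical lines of any closed substrip, and likewise $\hat g(\xi)=O(1/|\xi|)$. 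Fix $c$ with $1<c<1+\delta$ and apply the residue theorem to $(-\zeta'/\zeta)(s)G(s)$ on the rectangle with vertices $c\pm iT'$ and $(1-c)\pm iT'$; writing $\int_{(a)}^{T'}$ for $\int_{a-iT'}^{a+iT'}$, and using that $-\zeta'/\zeta$ has a simple pole at $s=1$ with residue $1$, a pole of residue $-m_\rho$ at each nontrivial zero $\rho=\tfrac12+i\gamma$, that $G(\tfrac12+i\gamma)=\hat g(-\gamma/2\pi)$, and that the zero set is symmetric under $\gamma\mapsto-\gamma$, the residue theorem gives
\[
\tfrac1{2\pi i}\Big[\int_{(c)}^{T'}-\int_{(1-c)}^{T'}\Big]\big(-\tfrac{\zeta'}{\zeta}\big)(s)G(s)\,ds+(\text{horizontal segments})=G(1)-\sum_{|\gamma|<T'}\hat g(\gamma/2\pi),
\]
the sum taken with multiplicity.

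On the edge $\Re s=c$ I would expand $-\zeta'/\zeta(s)=\sum_n\Lambda(n)n^{-s}$ and integrate termwise: $\tfrac1{2\pi i}\int_{c-iT'}^{c+iT'}n^{-s}G(s)\,ds=\int g(x)e^{-x/2}\tfrac{e^{c(x-\log n)}\sin(T'(x-\log n))}{\pi(x-\log n)}\,dx$, which by Dirichlet's localization theorem for functions of bounded variation tends to $n^{-1/2}\tfrac{g(\log n+)+g(\log n-)}2=n^{-1/2}g(\log n)$ — the normalization $g(x)=\tfrac{g(x+)+g(x-)}2$ being exactly what is needed here. The $n$-sum and $T'$-limit may be exchanged by dominated convergence, the Dirichlet integrals being bounded uniformly in $T'$ by $O(n^{-c})$ (summable against $\Lambda(n)$) by $(a)$ and $(b)$, so the $\Re s=c$ edge contributes $\sum_n\Lambda(n)n^{-1/2}g(\log n)$. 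On the edge $\Re s=1-c$ I would invoke the functional equation in the form $-\zeta'/\zeta(s)=\mathcal A(s)+\zeta'/\zeta(1-s)$ with $\mathcal A(s):=\tfrac12\tfrac{\Gamma'}{\Gamma}(s/2)+\tfrac12\tfrac{\Gamma'}{\Gamma}((1-s)/2)-\log\pi$, observing $\mathcal A(\tfrac12+i\xi)=\Omega(\xi)$. Treating the $\zeta'/\zeta(1-s)$ part as on the other edge (now $n^{-(1-s)}$ localizes at $x=-\log n$) gives the contribution $-\sum_n\Lambda(n)n^{-1/2}g(-\log n)$; for the $\mathcal A(s)G(s)$ part I would shift the contour from $\Re s=1-c$ to the critical line, crossing only the simple pole of $\mathcal A$ at $s=0$ (residue $-1$, hence a contribution $+G(0)$), leaving $\tfrac1{2\pi}\int_{-T'}^{T'}\Omega(\xi)G(\tfrac12+i\xi)\,d\xi=\tfrac1{2\pi}\int_{-T'}^{T'}\Omega(\xi)\hat g(\xi/2\pi)\,d\xi$ since $\Omega$ is even. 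Collecting the pieces with $L=T'$,
\[
\sum_{|\gamma|<T'}\hat g\big(\tfrac\gamma{2\pi}\big)-\tfrac1{2\pi}\int_{-T'}^{T'}\Omega(\xi)\hat g\big(\tfrac\xi{2\pi}\big)\,d\xi=G(1)+G(0)-\sum_n\Lambda(n)n^{-1/2}\big[g(\log n)+g(-\log n)\big]+o(1),
\]
whose right-hand side is $\int_{-\infty}^\infty[g(x)+g(-x)]e^{-x/2}\,d(e^x-\psi(e^x))$.

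Two points demand care. First, the horizontal segments: $-\zeta'/\zeta$ is large near ordinates of zeros, so $T'\to\infty$ must be taken through a sequence on which $\zeta'/\zeta(\sigma+iT')\ll\log^2 T'$ uniformly for $\sigma$ in the relevant range — such $T'$ exist in every unit interval, by the partial-fraction expansion of $\zeta'/\zeta$ together with the Riemann--von Mangoldt count $N(T)=\tfrac T{2\pi}\log\tfrac T{2\pi}-\tfrac T{2\pi}+O(\log T)$ — after which the $O(1/|s|)$ decay of $G$ makes the segments $o(1)$; the segments arising in the shift of $\mathcal A G$ are controlled by Stirling's estimate $\mathcal A(s)=O(\log|s|)$. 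One then passes from the limit over this sequence to the limit over all $L$ in the statement, which is legitimate because both $\sum_{|\gamma|<L}\hat g(\gamma/2\pi)$ and $\int_{-L}^L\tfrac\Omega{2\pi}\hat g(\cdot/2\pi)$ change by only $O(\log L/L)$ over a unit interval of $L$ (using $\hat g(\xi)=O(1/|\xi|)$, $\Omega(\xi)=O(\log|\xi|)$, and the zero count). Second — the conceptual heart — neither the zero sum nor the $\Omega$-integral converges absolutely: $\hat g$ decays only like $1/|\xi|$ while the zeros have density $\sim\tfrac1{2\pi}\log|\xi|$ and $\Omega(\xi)=\log(|\xi|/2\pi)+O(|\xi|^{-2})$ by Stirling; only their symmetric difference converges, the cancellation being exactly the agreement of the smooth part $\tfrac1{2\pi}\log(|\xi|/2\pi)$ of the zero-counting density with $\Omega(\xi)/2\pi$. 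This is the real obstacle — everything else is bookkeeping — and it is the reason the truncations of the sum and the integral must be taken at a common height $L$ with the limit taken jointly.
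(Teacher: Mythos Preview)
Your argument is correct and is precisely the ``simple contour integration argument, making use of the reflection formula to evaluate one-half of the contour'' that the paper itself describes (without giving details) and attributes to Weil, Iwaniec--Kowalski, and Montgomery--Vaughan. The paper does not supply its own proof beyond that one-line sketch, so there is nothing further to compare.
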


The explicit formula in this generality is due to Weil, \cite{Weil} but before him something very much like it was written down in varying degrees by Riemann \cite{Rie}, and Guinand \cite{Gui}. Note that if
$$
\tilde{d}(\xi) = \sum_\gamma \delta_\gamma(\xi),
$$
the left hand side of the explicit formula becomes the principal value integral
$$
-\!\!\!\!\!\!\!\int_{-\infty}^\infty \hat{g}\big(\tfrac{\xi}{2\pi}\big) \big[\tilde{d}(\xi)-\tfrac{\Omega(\xi)}{2\pi}\big]d\xi.
$$
(In what follows we will be working with nice enough functions that it will not matter that this is a principal value integral.) If we define $S(T)$ in the standard way (see \cite{MontVau} pp. 452) so that
$$
N(T) = \frac{1}{\pi}\arg \Gamma\big(\tfrac{1}{4}+i\tfrac{T}{2}\big)-\tfrac{T}{2\pi}\log \pi + 1 + S(T),
$$
then (only on the Riemann Hypothesis)
$$
\big[\tilde{d}(\xi)-\tfrac{\Omega(\xi)}{2\pi}\big]d\xi = dS(\xi)
$$
This is, if nothing else, a notational convenience, and we will make use of it for that reason in what follows.

$S(T)$ is relatively small and oscillatory, so that $\tfrac{\Omega(\xi)}{2\pi}$ is an expression for the mean density of zeroes around $\xi$. By Stirling's formula,
$$
\frac{\Omega(\xi)}{2\pi} = \frac{\log\big((|\xi|+2)/2\pi\big)}{2\pi} + O\Big(\frac{1}{|\xi|+2}\Big).
$$
It is therefore clear in this formulation that the explicit formula expresses a Fourier duality between the error term in the prime number theorem and the error term of the zero-counting function.

The explicit formula is proven by a simple contour integration argument, making use of the the reflection formula to evaluate one-half of the contour. (For the outline of a proof, with the final result stated slightly differently, see \cite{Iwa} pp. 108 or \cite{MontVau} pp. 410.)

We will also need another result which makes an appearance in \cite{MontVau} -- in fact it is what is used by the authors to state the explicit formula differently. This is

\begin{lem}
\label{Digamma}
Let $a > 0$ and $b > 0$ be fixed. If $J \in L^1(\mathbb{R})$, $J$ is of bounded variation on $\mathbb{R}$, and if $J(x) = J(0) + O(|x|)$, then
$$
\lim_{T\rightarrow\infty}\int_{-T}^T \tfrac{\Gamma'}{\Gamma}(a\pm i bt)\hat{J}(t) dt = \tfrac{\Gamma'}{\Gamma}(a)J(0) + \int_0^\infty \frac{e^{-ay}}{1-e^{-y}}\Big[J(0)-J\Big(\mp\tfrac{by}{2\pi}\Big)\Big] dx.
$$
\end{lem}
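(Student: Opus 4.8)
The plan is to derive the identity from the classical (Gauss) integral representation
$$
\frac{\Gamma'}{\Gamma}(z) = \frac{\Gamma'}{\Gamma}(a) + \int_0^\infty \frac{e^{-ay} - e^{-zy}}{1-e^{-y}}\,dy, \qquad \mathrm{Re}\,z,\ \mathrm{Re}\,a > 0,
$$
specialized to $z = a\pm ibt$. Substituting this into $\int_{-T}^T \tfrac{\Gamma'}{\Gamma}(a\pm ibt)\hat J(t)\,dt$ and interchanging the $t$- and $y$-integrations — legitimate for each fixed finite $T$, since the $y$-integrand is $O\big(|t|\,|\hat J(t)|\big)$ near $y=0$ and exponentially small for large $y$ — gives, with $G_\pm(y,T) := \int_{-T}^T (1 - e^{\mp ibty})\hat J(t)\,dt$,
$$
\int_{-T}^T \frac{\Gamma'}{\Gamma}(a\pm ibt)\hat J(t)\,dt = \frac{\Gamma'}{\Gamma}(a)\int_{-T}^T \hat J(t)\,dt \;+\; \int_0^\infty \frac{e^{-ay}}{1-e^{-y}}\,G_\pm(y,T)\,dy .
$$
Two features of the hypotheses make this tractable: since $J\in L^1$ is of bounded variation, $G_\pm(y,T)$ equals the increment $(D_T * J)(0) - (D_T * J)(\mp\tfrac{by}{2\pi})$ of the partial Fourier reconstruction $(D_T*J)(\xi) = \int_{\mathbb{R}} \tfrac{\sin 2\pi T(\xi-x)}{\pi(\xi-x)}J(x)\,dx$, and by the Dirichlet--Jordan theorem $(D_T*J)(\xi)\to \tfrac12\big(J(\xi+)+J(\xi-)\big)$; while the assumption $J(x) = J(0)+O(|x|)$ makes $J$ Lipschitz near the origin, so that $J(0)-J(\mp\tfrac{by}{2\pi}) = O(y)$ and the integral on the right of the asserted identity converges absolutely despite the non-integrable singularity $\tfrac{e^{-ay}}{1-e^{-y}}\asymp\tfrac1y$ of the kernel at $y=0$.

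Granting these, the first term tends to $\tfrac{\Gamma'}{\Gamma}(a)J(0)$ (the origin being a continuity point of $J$), so the task is to show
$$
\int_0^\infty \frac{e^{-ay}}{1-e^{-y}}\,G_\pm(y,T)\,dy \;\longrightarrow\; \int_0^\infty \frac{e^{-ay}}{1-e^{-y}}\Big(J(0) - J\big(\mp\tfrac{by}{2\pi}\big)\Big)\,dy .
$$
On $\{y\ge\delta\}$ this is plain dominated convergence: there the kernel is integrable and $|G_\pm(y,T)| \le 2C_0\,\mathrm{Var}(J)$ uniformly (integrate by parts against $J$ and use that an antiderivative of $\tfrac{\sin 2\pi Tu}{\pi u}$ is $\tfrac12+\tfrac1\pi\mathrm{Si}(2\pi Tu)$, bounded by an absolute constant $C_0$). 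The difficulty is the region $y\searrow 0$, where the $\tfrac1y$ weight makes mere boundedness of $G_\pm$ useless. Here one needs two $T$-uniform estimates. First, differentiating the reconstruction, $\big\|(D_T*J)'\big\|_\infty = \|D_T * dJ\|_\infty \le \|D_T\|_\infty\,\mathrm{Var}(J) = 2T\,\mathrm{Var}(J)$, so
$$
|G_\pm(y,T)| \le 2T\cdot\tfrac{by}{2\pi}\,\mathrm{Var}(J) \lesssim Ty .
$$
Second — the Dini--Lipschitz estimate — since $J$ is Lipschitz on a neighbourhood $[-r_0,r_0]$ of the origin,
$$
\sup_{|\xi|\le r_0/2}\big|(D_T*J)(\xi) - J(\xi)\big| \;\lesssim\; \frac{\log(2+T)}{T} =: \varepsilon_T ;
$$
here the contribution of $|u|\le r_0/2$ to $(D_T*J)(\xi)-J(\xi) = \int_{\mathbb{R}} \tfrac{\sin2\pi Tu}{\pi u}\big(J(\xi-u)-J(\xi)\big)\,du$ is estimated by the classical argument (splitting at $|u|=1/T$ and using the second mean value theorem on $1/T\le|u|\le r_0/2$, with the Lipschitz bound $|J(\xi-u)-J(\xi)|\lesssim|u|$), and the tail $|u|>r_0/2$ is $O(1/T)$ by integrating by parts against $dJ$ and using $|\tfrac1\pi\mathrm{Si}(2\pi Tu) - \tfrac12| \lesssim \tfrac1{T|u|}$.

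Combining, for $y$ small enough that $\tfrac{by}{2\pi} < r_0/2$ we have $\big|G_\pm(y,T) - \big(J(0)-J(\mp\tfrac{by}{2\pi})\big)\big| \le \min\big(2\varepsilon_T,\ C'Ty\big)$, whence
$$
\int_0^{\delta}\frac{e^{-ay}}{1-e^{-y}}\Big|G_\pm(y,T) - \big(J(0)-J(\mp\tfrac{by}{2\pi})\big)\Big|\,dy \;\lesssim\; \int_0^{\delta}\frac{\min(\varepsilon_T, Ty)}{y}\,dy \;\lesssim\; \varepsilon_T\big(1+\log\tfrac{T}{\varepsilon_T}\big) \;\longrightarrow\; 0,
$$
since $\varepsilon_T\log T\to 0$; together with the $\{y\ge\delta\}$ contribution this gives the required convergence, and hence the lemma. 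The main obstacle is the Dini--Lipschitz estimate for $(D_T*J)$ — i.e.\ controlling the \emph{rate} of convergence of the partial Fourier reconstruction near the origin — which is exactly where the hypothesis $J(x)=J(0)+O(|x|)$ is indispensable (for merely continuous $J$ this rate could be arbitrarily slow and the interpolation above would fail) and where $J\in L^1$ of bounded variation is used to tame the part of $J$ far from the origin; apart from this point one simply cannot interchange $\lim_T$ with the singular $y$-integral directly.
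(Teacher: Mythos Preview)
The paper does not prove this lemma at all --- it simply cites Montgomery--Vaughan, p.~414 --- so there is no in-paper argument to compare against. Your approach via the Gauss integral for $\Gamma'/\Gamma$, the identification $G_\pm(y,T)=(D_T*J)(0)-(D_T*J)(\mp by/2\pi)$, and the split into $\{y\ge\delta\}$ (dominated convergence with the uniform bound $|D_T*J|\le C_0\,\mathrm{Var}(J)$) and $\{y<\delta\}$ is exactly the standard line, and up to that point everything you write is correct.

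The gap is in your Dini--Lipschitz step. You assert that ``$J$ is Lipschitz on a neighbourhood $[-r_0,r_0]$ of the origin'' and then use $|J(\xi-u)-J(\xi)|\lesssim|u|$ uniformly for $|\xi|\le r_0/2$. But the hypothesis $J(x)=J(0)+O(|x|)$ is a Lipschitz condition \emph{at the single point $0$}, not on a neighbourhood: it only gives $|J(\xi-u)-J(\xi)|\lesssim|\xi|+|u|$, and a BV function satisfying it can have jumps (hence Gibbs overshoots of fixed relative size) accumulating at the origin, so that $\sup_{|\xi|\le r_0/2}|(D_T*J)(\xi)-J(\xi)|$ need not tend to $0$ at all. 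Your bound $|\Delta(y,T)|\le\min(2\varepsilon_T,C'Ty)$ therefore fails in general --- the first branch of the minimum is unavailable --- and without it the interpolation $\int_0^\delta y^{-1}\min(\varepsilon_T,Ty)\,dy\to 0$ collapses (the remaining bounds $|\Delta|\lesssim\min(1,Ty)$ give only $O(1+\log(T\delta))$). The Dini rate $\varepsilon_T\asymp(\log T)/T$ is legitimately available at $\xi=0$, but you also need quantitative control of $(D_T*J)(\xi)-J(\xi)$ at nearby $\xi$, and for that the pointwise-at-$0$ hypothesis together with bounded variation must be combined more carefully than a blanket uniform Dini--Lipschitz estimate; this is where Montgomery--Vaughan's argument does additional work that your sketch elides.
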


\begin{proof}
See \cite{MontVau} pp. 414.
\end{proof}

\textit{An outline of a proof of Theorem \ref{main}:} This outline will be expanded upon in section 4. We prove our result with a series of computational Lemmas. To simplify exposition, we will deal only with the case that $h(\nu_1,\nu_2) = r(\nu_1/2\pi)r(\nu_2/2\pi)$ for $\hat{r}$ smooth and compactly supported. The proof in general is identical, but writing $h$ as a product of two functions draws some parts of the proof into clearer light. We dilate the functions $r$ for the sake of tidying up some of the formulas that follow.

Because $\big[\tilde{d}(\xi)-\tfrac{\Omega(\xi)}{2\pi}\big]d\xi = dS(\xi)$, and because we do not expect that $\Omega(\xi)/2\pi$ to correlate at a local scale with $\tilde{d}(\xi)$, to understand the pair correlation statistics $\tilde{d}(\xi_1+t)\tilde{d}(\xi_2+t)$ it is enough to understand those of $dS(\xi_1+t)dS(\xi_2+t)$. In fact, in many ways, the statistics of the latter are a more natural quantity to consider. The second error term in Theorem \ref{main} alluded to earlier is due to the passage back to the former.

It will follow from computation with the explicit formula that the smoothed average
$$
\Bigg\langle \int\int_{\mathbb{R}^2} r\Big(\frac{\xi_1}{2\pi}\Big) r\Big(\frac{\xi_2}{2\pi}\Big)e\big(\alpha_1\tfrac{L}{2\pi}\xi_1 + \alpha_2 \tfrac{L}{2\pi}\xi_2\big) dS(\xi_1+t)dS(\xi_2+t)\Bigg\rangle_{t\in[T,T+H]}
$$
is very close to
\begin{align*}
&\sum_{\log n - \log m = O(1/T)} \big(\hat{r}(-\log n - \alpha_1 L) \hat{r}(\log m - \alpha_2 L ) + \hat{r}(\log n - \alpha_1 L)\hat{r}(- \log m - \alpha_2 L)\big)\frac{\Lambda^2(n)}{n}\\
&\;=\sum_{n=1}^\infty \big(\hat{r}(-\log n - \alpha_1 L) \hat{r}(\log n - \alpha_2 L ) + \hat{r}(\log n - \alpha_1 L)\hat{r}(- \log n - \alpha_2 L)\big)\frac{\Lambda^2(n)}{n}
\end{align*}
as in passing from the first line both $n$ and $m$ must be less than $T$, for $\alpha_1, \alpha_2 \leq 1$ by the compact support of $\hat{r}$. (Recall $L$ is near $\log T$.) These are the terms inherited from the measure $d\psi(e^x)$ in the explicit formula; those terms which are inherited from $d(e^x)$ drop out, roughly speaking, because they have no discrete part. These ideas date back to Montgomery's original proof in \cite{Montgomery}. We are able to obtain slightly better error terms only by using smoothed averages. This suppresses the appearance of any large-sieve-type inequalities. This is the content of Lemma \ref{6}.

One can already see the diagonal terms in this, but care is needed to do so rigorously. This is done in Lemma \ref{7}.

We do not recover the off-diagonal terms in this way, only a proxy for them, but we show in Lemma \ref{offdiagonal} that for the restricted class of test functions we consider there is no difference between the two.

Finally, in Lemmas \ref{1level} and \ref{11} we show that our intuition about being able to recover $\tilde{d}(\xi_1+t)\tilde{d}(\xi_2+t)$ from $dS(\xi_1+t)dS(\xi_2+t)$ is correct.

\section{Lemmata}

\begin{lem}
\label{6}
For fixed $r$ and $\sigma$, with $\hat{r}$ and $\hat{\sigma}$ smooth and compactly supported and $\sigma$ of mass 1, we have
\begin{align}
\label{a}
&\frac{1}{H}\int_\mathbb{R}\sigma\Big(\tfrac{t-T}{H}\Big)\int_{-\infty}^\infty\int_{-\infty}^\infty r\Big(\tfrac{\xi_1-t}{2\pi}\Big) r\Big(\tfrac{\xi_2-t}{2\pi}\Big) e\big(\alpha_1 \tfrac{L}{2\pi} (\xi_1-t) + \alpha_2 \tfrac{L}{2\pi}(\xi_2-t)\big)\, dS(\xi_1)dS(\xi_2)\,dt\notag\\
&=  \sum_{n=1}^\infty \big[\hat{r}(-\log n-\alpha_1 L)\hat{r}(\log n-\alpha_2 L)+\hat{r}(\log n-\alpha_1 L)\hat{r}(-\log n-\alpha_2 L)\big]\frac{\Lambda^2(n)}{n}\\
&\;\;\;+O\bigg(\frac{e^{(|\tfrac{\alpha_1}{2}| + |\tfrac{\alpha_2}{2}|)L}}{H}\bigg).\notag
\end{align}
Here the implied constant depends upon $r$ and $\sigma.$
\end{lem}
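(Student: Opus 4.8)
The plan is to push each of the two $dS$--integrals through the explicit formula and then keep track of which terms survive the smoothed average in $t$. Fix $t$ and set $G_i(\xi):=r\big(\tfrac{\xi-t}{2\pi}\big)e\big(\alpha_i\tfrac{L}{2\pi}(\xi-t)\big)$ for $i=1,2$; a short Fourier computation shows $G_i(\xi)=\hat g_i(\xi/2\pi)$, where $g_i(x)=e\big(\tfrac{xt}{2\pi}\big)\hat r(-x-\alpha_iL)$, so in particular $g_i(\pm\log n)=n^{\pm it}\hat r(\mp\log n-\alpha_iL)$. Since $\hat r\in C_c^\infty$, each $g_i$ is smooth and compactly supported, hence admissible in the explicit formula; using $\big[\tilde d(\xi)-\tfrac{\Omega(\xi)}{2\pi}\big]d\xi=dS(\xi)$ and $d\psi(e^x)=\sum_n\Lambda(n)\delta_{\log n}$, this gives $\int_{\mathbb R}G_i(\xi)\,dS(\xi)=C_i(t)-D_i(t)$ with
$$
C_i(t):=\int_{\mathbb R}\big[g_i(x)+g_i(-x)\big]e^{x/2}\,dx,\qquad D_i(t):=\sum_{n\ge1}\frac{\Lambda(n)}{\sqrt n}\Big[n^{it}\hat r(-\log n-\alpha_iL)+n^{-it}\hat r(\log n-\alpha_iL)\Big].
$$
As the one--variable $dS$--integrals converge absolutely, the inner double integral in \eqref{a} equals $(C_1(t)-D_1(t))(C_2(t)-D_2(t))$, and it remains to integrate the four products against $\tfrac1H\sigma\big(\tfrac{t-T}{H}\big)$.

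First I would discard the three products involving a factor $C_i$. Unfolding its definition, $C_i(t)$ is a unimodular factor times a linear combination, with coefficients $e^{\mp\alpha_iL/2}$, of Fourier transforms of fixed $C_c^\infty$ functions, whence $|C_i(t)|\lesssim e^{|\alpha_i|L/2}(1+|t|)^{-2}$; and Chebyshev's estimate $\psi(x)\ll x$ gives $|D_i(t)|\lesssim e^{|\alpha_i|L/2}$ uniformly in $t$. Since $\tfrac1H\big|\sigma\big(\tfrac{t-T}{H}\big)\big|\le\tfrac1H\|\sigma\|_\infty$ and $(1+|t|)^{-2}$ is integrable, these three products contribute $\lesssim e^{(|\alpha_1|+|\alpha_2|)L/2}/H$, inside the stated error. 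For $D_1(t)D_2(t)$ I would expand into a fourfold sum over $n,m$ of $\tfrac{\Lambda(n)\Lambda(m)}{\sqrt{nm}}$ times two $\hat r$--factors times one of $(nm)^{\pm it}$, $(n/m)^{\pm it}$. Averaging uses $\tfrac1H\int\sigma\big(\tfrac{t-T}{H}\big)e^{it\theta}\,dt=e^{iT\theta}\hat\sigma\big(-\tfrac{H\theta}{2\pi}\big)$, which, $\hat\sigma$ being supported in some $[-A_\sigma,A_\sigma]$, vanishes unless $|\theta|\le2\pi A_\sigma/H$; so the $(nm)^{\pm it}$--sums are restricted to $nm\le e^{2\pi A_\sigma/H}$ and the $(n/m)^{\pm it}$--sums to $|\log(n/m)|\le2\pi A_\sigma/H$. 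The diagonal $n=m$ of the two $(n/m)^{\pm it}$--sums carries the full weight $\hat\sigma(0)=\int\sigma=1$ and produces exactly
$$
\sum_{n\ge1}\frac{\Lambda^2(n)}{n}\big[\hat r(-\log n-\alpha_1L)\hat r(\log n-\alpha_2L)+\hat r(\log n-\alpha_1L)\hat r(-\log n-\alpha_2L)\big],
$$
the asserted main term; the normalisation $\int\sigma=1$ is used precisely here.

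It then remains to bound, by $O\big(e^{(|\alpha_1|/2+|\alpha_2|/2)L}/H\big)$, the two $(nm)^{\pm it}$--sums and the off--diagonal ($n\ne m$) part of the two $(n/m)^{\pm it}$--sums. The support of the $\hat r$--factors, together with $\Lambda(n)\Lambda(m)\ne0$ (which forces $n,m\ge2$), confines $\log n$ and $\log m$ to fixed intervals of length $O(1)$ for which $\tfrac1{\sqrt{nm}}\lesssim e^{-(|\alpha_1|/2+|\alpha_2|/2)L}$, and makes the $(nm)^{\pm it}$--sums empty unless $H\lesssim_\sigma1$; in that case $1/H\gtrsim1$ and these sums are $\le\|\hat\sigma\|_\infty|D_1(t)||D_2(t)|\lesssim e^{(|\alpha_1|+|\alpha_2|)L/2}\lesssim e^{(|\alpha_1|+|\alpha_2|)L/2}/H$. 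For the off--diagonal of the $(n/m)^{\pm it}$--sums I may likewise assume $H\gtrsim1$ (else $1/H\gtrsim1$ and the pointwise bound $|D_1(t)D_2(t)|\lesssim e^{(|\alpha_1|+|\alpha_2|)L/2}$ suffices); then $n\ne m$ forces $|\log(n/m)|\gg1/\max(n,m)$, so together with $|\log(n/m)|\le2\pi A_\sigma/H$ the sum is empty unless $\max(n,m)\gg H$, and, comparing with the $\hat r$--support (on which $\max(n,m)\lesssim e^{(|\alpha_1|/2+|\alpha_2|/2)L}$, with moreover $n\asymp m$), unless $Y:=e^{(|\alpha_1|/2+|\alpha_2|/2)L}/H\gtrsim1$. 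Bounding $\hat\sigma$ trivially and turning $|\log(n/m)|\le2\pi A_\sigma/H$ into $0<|n-m|\le Y$ (up to a constant), the contribution is
$$
\lesssim e^{-(|\alpha_1|/2+|\alpha_2|/2)L}\sum_{\substack{n,m\text{ in the }\hat r\text{--ranges}\\0<|n-m|\le Y}}\Lambda(n)\Lambda(m)\ \lesssim\ e^{-(|\alpha_1|/2+|\alpha_2|/2)L}\cdot Y\,e^{(|\alpha_1|/2+|\alpha_2|/2)L}=\frac{e^{(|\alpha_1|/2+|\alpha_2|/2)L}}{H},
$$
where the inner sum is estimated by the standard sieve upper bound $\sum_{n\le x}\Lambda(n)\Lambda(n+h)\ll\mathfrak S(h)\,x$ (with $\mathfrak S$ the singular series) summed over $0<h\le Y$, together with $\sum_{h\le Y}\mathfrak S(h)\ll Y$. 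This proves the lemma, with all implied constants depending only on $r$ and $\sigma$.

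The one non--routine ingredient is the sieve estimate for correlations of $\Lambda$ in the last step --- this is precisely where Montgomery's original argument calls on the large sieve. Smoothing the $t$--average has replaced the slowly decaying Fej\'er--type kernel by the compactly supported $\hat\sigma$, which localises $m$ to a set of $\ll e^{(|\alpha_1|/2+|\alpha_2|/2)L}/H$ values near each $n$, so that a crude upper--bound sieve suffices in its stead; this is what buys the inverse--power--of--$T$ error term. Everything else --- the Fourier identification of $g_i$, the rapid decay of $C_i$, Chebyshev for $D_i$, and the bookkeeping of the $(nm)$-- and diagonal terms --- is routine.
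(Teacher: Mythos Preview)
Your proof is correct and follows the same skeleton as the paper's: apply the explicit formula to each $dS$--factor, expand, use the compact support of $\hat\sigma$ to localise, and read off the diagonal $n=m$ as the main term. Two differences are worth noting. First, you bound the continuous contributions $C_i(t)$ by their rapid decay in $t$ and then integrate against $\tfrac1H\sigma$, whereas the paper swaps the $t$--integral inside first (producing the factor $\hat\sigma\big(\tfrac{H}{2\pi}(\varepsilon_1x_1+\varepsilon_2x_2)\big)$) and bounds the mixed $d(e^{x_1})\,d\psi(e^{x_2})$ integrals directly from the support of $\hat\sigma$; both routes yield the same error. Second, and more interestingly, the paper handles the off--diagonal $n_1\ne n_2$ by observing that once $H/e^{(|\alpha_1|+|\alpha_2|)L/2}$ exceeds a constant depending on $r,\sigma$, the constraint $|\log(n_1/n_2)|\lesssim 1/H$ together with $|\log(n_1/n_2)|\gtrsim 1/\sqrt{n_1n_2}$ and the $\hat r$--support forces $n_1=n_2$, so the off--diagonal is \emph{empty}; the complementary range of $H$ is dispatched in one terse line. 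You instead invoke an upper--bound sieve to control the off--diagonal uniformly in $H$. This is more work but covers all $H$ cleanly (and arguably tidies up the paper's brief treatment of small $H$). In the only application of the lemma, however, $H$ is taken close to $T$, so the simpler ``off--diagonal is empty'' observation already suffices; your closing remark is therefore slightly off --- the point of the smoothing here is not to replace the large sieve by a small sieve, but to remove the need for any sieve at all in the regime of interest.
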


\begin{proof} Note that $\hat{g}(\tfrac{\xi}{2\pi}) = r\big(\tfrac{\xi-t}{2\pi}\big) e\big(\alpha \tfrac{L}{2\pi} (\xi-t)\big)$ when $g(x) = e\big(\tfrac{xt}{2\pi}\big)\hat{r}(-(x+\alpha L)).$ As this function has compact support, we are justified in using the explicit formula to evaluate the left hand side of \eqref{a}. It is equal to
\begin{align}
\label{b}
\frac{1}{H}\int_\mathbb{R}\sigma\Big(\tfrac{t-T}{H}\Big)&\sum_{\varepsilon\in\{-1,1\}^2}\int_{-\infty}^\infty\int_{-\infty}^\infty e(-\tfrac{t}{2\pi}(\varepsilon_1x_1+\varepsilon_2x_2))
\hat{r}(\varepsilon_1x_1-\alpha_1L) \notag\\&\cdot\hat{r}(\varepsilon_2x_2-\alpha_2L)
e^{-(x_1+x_2)/2} d\big(e^{x_1}-\psi(e^{x_1})\big)d\big(e^{x_2}-\psi(e^{x_2})\big)\notag\\
= \sum_{\varepsilon\in\{-1,1\}^2}\int_{-\infty}^\infty&\int_{-\infty}^\infty e(-\tfrac{T}{2\pi}(\varepsilon_1x_1+\varepsilon_2x_2))\hat{\sigma}\big(\tfrac{H}{2\pi}(\varepsilon_1x_1+\varepsilon_2x_2)\big)
\hat{r}(\varepsilon_1x_1-\alpha_1L) \\
&\cdot\hat{r}(\varepsilon_2x_2-\alpha_2L)
e^{-(x_1+x_2)/2} d\big(e^{x_1}-\psi(e^{x_1})\big)d\big(e^{x_2}-\psi(e^{x_2})\big),\notag
\end{align}
where the interchange of integrals is justified by Fubini's theorem.

Note that, for instance,
\begin{align*}
&\int_{-\infty}^\infty  \hat{\sigma}\big(\tfrac{H}{2\pi}(\varepsilon_1x_1 + \varepsilon_2x_2)\big)\hat{r}(\varepsilon_1x_1-\alpha_1L) \hat{r}(\varepsilon_2x_2-\alpha_2L)e^{-(x_1+x_2)/2}d(e^{x_1})\\
&\lesssim_{r,\sigma}\hat{r}(\varepsilon_2x_2-\alpha_2L)e^{-x_2/2}\frac{e^{\min(|\alpha_1|L,x_2)/2}}{H},
\end{align*}
and if $M(x)$ is either of the functions $x$ or $\psi(x)$,
$$
\int_{-\infty}^\infty \hat{r}\big((\varepsilon_2x_2-\alpha_2L)\big) e^{-x_2/2} e^{|\alpha_1|L/2}dM(e^{x_2}) \lesssim_r e^{(\tfrac{|\alpha_1|+|\alpha_2|}{2})L}.
$$
In this way, expanding the measure
\begin{align*}
&d\big(e^{x_1}-\psi(e^{x_1})\big)d\big(e^{x_2}-\psi(e^{x_2})\big)\\&\;= d(e^{x_1})d(e^{x_2}) - d(x^{x_2})d\psi(e^{x_1}) - d(e^{x_1})d\psi(e^{x_2})+d\psi(x^{x_1})d\psi(e^{x_2}),
\end{align*}
the integrand in \eqref{a} integrated with respect to all terms but $d\psi(e^{x_1})d\psi(e^{x_2})$ is bound by
$$
O\bigg(\frac{e^{(\tfrac{\alpha_1}{2} + \tfrac{\alpha_2}{2})L}}{H}\bigg),
$$
for any $\varepsilon\in\{-1,1\}^2.$

On the other hand,
\begin{align*}
\sum_{\varepsilon\in\{-1,1\}^2}\int_{-\infty}^\infty&\int_{-\infty}^\infty e(-\tfrac{T}{2\pi}(\varepsilon_1x_1+\varepsilon_2x_2))\hat{\sigma}\big(\tfrac{H}{2\pi}(\varepsilon_1x_1+\varepsilon_2x_2)\big)
\hat{r}(\varepsilon_1x_1-\alpha_1L) \notag\\&\cdot\hat{r}(\varepsilon_2x_2-\alpha_2L)
e^{-(x_1+x_2)/2} d\psi(e^{x_1})d\psi(e^{x_2})\\
=\sum_{\varepsilon\in\{-1,1\}^2}&\sum_{n_1,n_2=1}^\infty e(-\tfrac{T}{2\pi}(\varepsilon_1\log n_1+\varepsilon_2\log n_2))\hat{\sigma}\big(\tfrac{H}{2\pi}(\varepsilon_1\log n_1+\varepsilon_2\log n_2)\big)
\hat{r}(\varepsilon_1\log n_1-\alpha_1L) \notag\\&\cdot\hat{r}(\varepsilon_2\log n_2-\alpha_2L) \frac{\Lambda(n_1)\Lambda(n_2)}{\sqrt{n_1n_2}}
\end{align*}
We will consider each $\varepsilon \in \{-1,1\}^2$ in turn. That $\hat{\sigma}$ is compactly supported implies that we may restrict our sum to those $(n_1,n_2)$ with $\varepsilon_1 \log n_1 + \varepsilon_2 \log n_2 = O\big(\tfrac{1}{H}\big)$. But that $\hat{r}$ is compactly supported implies that we may restrict our sum to those $n_1 = \exp[\varepsilon_1\alpha_1L + O(1)], n_2 = \exp[\varepsilon_2\alpha_2L + O(1)]$. Since $n_1, n_2$ are positive integers, for sufficiently large $H$ the terms with $\varepsilon_1 = \varepsilon_2 = 1$ or $\varepsilon_1 = \varepsilon_2 = -1$ will be null. On the other hand, for $\varepsilon_1 =1, \varepsilon_2 = -1$ or $\varepsilon_1 = -1, \varepsilon_2 = 1$, if $n_1 \neq n_2$, we have $|n_1-n_2| \geq 1$, so
$$
|\varepsilon_1 \log n_1 + \varepsilon_2 \log n_2| \gtrsim \frac{1}{\sqrt{n_1 n_2}},
$$
and so if $H / e^{(|\tfrac{\alpha_1}{2}|+|\tfrac{\alpha_2}{2}|)L}$ is sufficiently large, the only $n_1, n_2$ for which $\varepsilon_1 \log n_1 + \varepsilon_2 \log n_2 = O\big(\tfrac{1}{H}\big)$ are those for which $n_1 = n_2$ so that $\varepsilon_1 \log n_1 + \varepsilon_2 \log n_2 = 0$. This gives \eqref{a}, as the left hand side is always $O(1)$, to cover the case that $e^{(|\tfrac{\alpha_1}{2}|+|\tfrac{\alpha_2}{2}|)L}/H$ is large.
\end{proof}

It is easy to formally discern the term
$$
\Big(\frac{\zeta'}{\zeta}\Big)'(1+iu)-B(iu) = \sum \frac{\log n \,\Lambda(n)}{n^{1+iu}}-\sum_p \frac{\log^2 p}{(p^{1+iu}-1)^2} = \sum \frac{\Lambda^2(n)}{n^{1+iu}}
$$
here, but some care is needed to deal with issues of convergence, near $u = 0$ especially. One can either make use of the explicit formula, or in some manner reprove it, and we follow the first path.

\begin{lem}
\label{7}
For $\min(|\alpha_1|,|\alpha_2|)\leq 1-\epsilon$ and $r$ with $\hat{r}$ smooth and compactly supported, and for $\lambda - (1-\epsilon)L$ sufficiently large (where `sufficiently large' depends only on $\epsilon$ and the region in which $\hat{r}$ can be supported),
\begin{align}
\label{c}
&\sum_{n=1}^\infty \big[\hat{r}(-\log n - \alpha_1 L)\hat{r}(\log n  - \alpha_2 L) + \hat{r}(\log n - \alpha_1 L) \hat{r}(-\log n - \alpha_2 L)\big] \frac{\log(n)\Lambda(n)}{n}\notag\\
&\;=\int_{-\infty}^\infty\int_{-\infty}^\infty r(\nu_1)r(\nu_2)e(\alpha_1 L \nu_1 + \alpha_2 L \nu_2)\bigg[\lambda \delta(\nu_1-\nu_2) + \Big(\frac{\zeta'}{\zeta}\Big)'\big(1+i2\pi(\nu_1-\nu_2)\big)\\
&\;\;\;\;\;\;\;\;\;\;\;\;\;\;\;\;\;\;\;\;\;\;\; + \Big(\frac{\zeta'}{\zeta}\Big)'\big(1-i2\pi(\nu_1-\nu_2)\big) + \frac{e((\nu_1-\nu_2)\lambda)+ e(-(\nu_1-\nu_2)\lambda)}{(2\pi (\nu_1-\nu_2))^2}\bigg]d\nu_1d\nu_2\notag
\end{align}
\end{lem}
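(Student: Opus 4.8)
\emph{Sketch of the intended proof.}

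The plan is to recast both sides as the pairing of one fixed one--dimensional tempered distribution with a fixed band--limited Schwartz function, and then to exploit that band--limit. Since $\hat r$ is compactly supported the sum on the left is finite, hence equal to $\lim_{\sigma\to0^+}S(\sigma)$, where $S(\sigma)$ is the same sum with $n^{-1}$ replaced by $n^{-1-\sigma}$. For $\sigma>0$ the series $\sum_n\Lambda(n)\log n\,n^{-1-\sigma}=\big(\frac{\zeta'}{\zeta}\big)'(1+\sigma)$ is absolutely convergent, so substituting $\hat r(\mp\log n-\alpha_j L)=\int r(\nu_j)\,n^{\pm i2\pi\nu_j}e(\alpha_j L\nu_j)\,d\nu_j$ and interchanging (legitimate since $|n^{i2\pi\nu}|=1$ and $r\in L^1$) gives
\[
S(\sigma)=\int\!\!\int r(\nu_1)r(\nu_2)\,e(\alpha_1 L\nu_1+\alpha_2 L\nu_2)\,P_\sigma(\nu_1-\nu_2)\,d\nu_1\,d\nu_2,
\]
with $P_\sigma(u):=\big(\frac{\zeta'}{\zeta}\big)'(1+\sigma+i2\pi u)+\big(\frac{\zeta'}{\zeta}\big)'(1+\sigma-i2\pi u)$.

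I would then take $\sigma\to0^+$. With the Laurent expansion $\big(\frac{\zeta'}{\zeta}\big)'(s)=(s-1)^{-2}+H(s-1)$, $H$ holomorphic near $0$, write $P_\sigma=g_\sigma+h_\sigma$ with $g_\sigma(u):=(\sigma+i2\pi u)^{-2}+(\sigma-i2\pi u)^{-2}$. As $\sigma\downarrow0$ the remainder $h_\sigma$ tends pointwise to the continuous function $h_0(u):=\big(\frac{\zeta'}{\zeta}\big)'(1+i2\pi u)+\big(\frac{\zeta'}{\zeta}\big)'(1-i2\pi u)+\frac1{2\pi^2u^2}$, dominated for $\sigma\in(0,1]$ by a fixed at most polynomially growing function (classical bounds for $\big(\frac{\zeta'}{\zeta}\big)'$ near $\Re s=1$, away from $s=1$), so dominated convergence handles the $h_\sigma$--part; and $\int g_\sigma=0=\int u\,g_\sigma$ together with the pointwise limit give $g_\sigma\to-\frac1{2\pi^2}\mathrm{Pf}\,\frac1{u^2}$ in $\mathcal S'$. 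Meanwhile, for any tempered distribution $k$ in $u$, the substitution $u=\nu_1-\nu_2$ plus Fubini give $\int\!\!\int r(\nu_1)r(\nu_2)e(\alpha_1 L\nu_1+\alpha_2 L\nu_2)k(\nu_1-\nu_2)\,d\nu_1d\nu_2=\langle k,g\rangle$, where $g\in\mathcal S$ has $\hat g(\xi)=\hat r(\xi-\alpha_1 L)\hat r(-\xi-\alpha_2 L)$. Hence the left side of the Lemma equals $\big\langle-\frac1{2\pi^2}\mathrm{Pf}\,\frac1{u^2}+h_0,\ g\big\rangle$. By $\min(|\alpha_1|,|\alpha_2|)\le1-\epsilon$ and $\mathrm{supp}\,\hat r\subseteq[-c,c]$ one has $\mathrm{supp}\,\hat g\subseteq\{\,|\xi|\le(1-\epsilon)L+c\,\}\subseteq(-\lambda,\lambda)$ whenever $\lambda-(1-\epsilon)L>c$, which is the meaning of ``$\lambda-(1-\epsilon)L$ sufficiently large''.

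It remains to match with the claimed right side, whose bracket is $\lambda\delta(u)+B(u)$ with $B(u):=\big(\frac{\zeta'}{\zeta}\big)'(1+i2\pi u)+\big(\frac{\zeta'}{\zeta}\big)'(1-i2\pi u)+\frac{e(u\lambda)+e(-u\lambda)}{(2\pi u)^2}$ — which is in fact the continuous function $h_0(u)+\frac{2(\cos2\pi\lambda u-1)}{(2\pi u)^2}$. Subtracting the two sides, the $h_0$--terms cancel; and since $\frac{2(\cos2\pi\lambda u-1)}{(2\pi u)^2}=\frac1{2\pi^2}(\cos2\pi\lambda u-1)\,\mathrm{Pf}\,\frac1{u^2}$ as distributions (the smooth prefactor vanishes to second order at $u=0$) the bare $\mathrm{Pf}\,\frac1{u^2}$ terms cancel too, leaving
\[
\mathrm{LHS}-\mathrm{RHS}=\Big\langle\,-\lambda\delta-\tfrac1{2\pi^2}\cos(2\pi\lambda u)\,\mathrm{Pf}\,\tfrac1{u^2}\, ,\ g\,\Big\rangle .
\]
Using $\widehat{\mathrm{Pf}\,u^{-2}}(\xi)=-2\pi^2|\xi|$ and the modulation rule, the distribution in this pairing has Fourier transform $-\lambda+\tfrac12(|\xi-\lambda|+|\xi+\lambda|)$, which vanishes on $(-\lambda,\lambda)$; by Parseval and $\mathrm{supp}\,\hat g\subseteq(-\lambda,\lambda)$ the pairing is $0$, which is the assertion.

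I expect the $\sigma\to0^+$ passage to be the crux: one must cleanly separate off the non--integrable $\sim u^{-2}$ singularity created by the double pole of $\big(\frac{\zeta'}{\zeta}\big)'$ at $s=1$, identify its $\mathcal S'$--limit, and secure a bound for $\big(\frac{\zeta'}{\zeta}\big)'(1+\sigma+it)$ uniform in $\sigma$ so that dominated convergence controls the regular part. This is precisely the information one would otherwise read off from the explicit formula applied to a suitable test function (the route the paper indicates); everything else is Fourier bookkeeping and the band--limit of $r$.
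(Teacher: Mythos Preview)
Your argument is correct and complete: the regularisation $n^{-1}\to n^{-1-\sigma}$ is harmless because the sum is finite, the identification with $(\zeta'/\zeta)'(1+\sigma\pm i2\pi u)$ is immediate, the splitting $P_\sigma=g_\sigma+h_\sigma$ isolates the double pole cleanly, the distributional limit $g_\sigma\to-\tfrac1{2\pi^2}\mathrm{Pf}\,u^{-2}$ follows from $\widehat{g_\sigma}(\xi)=|\xi|e^{-\sigma|\xi|}\to|\xi|$, the uniform bound $(\zeta'/\zeta)'(1+\sigma+it)=O(\log^2|t|)$ for $\sigma\in[0,1]$ (from the classical zero--free region, or trivially on RH via the sum over zeroes) secures dominated convergence for $h_\sigma$, and your final Fourier identity $-\lambda+\tfrac12(|\xi-\lambda|+|\xi+\lambda|)=0$ on $(-\lambda,\lambda)$ is exactly the band--limit computation required.

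This is, however, a genuinely different route from the paper's. The paper does not regularise or use finite parts: it rewrites the left side as an integral against $d\psi(e^x)$ with weight $|x|e^{-|x|/2}$, applies the explicit formula to trade $d\psi$ for $d(e^x)$ and $dS(\xi)$, and then reassembles $(\zeta'/\zeta)'$ from its partial--fraction expansion over the nontrivial zeroes together with the digamma contribution from $\Omega$. The band--limit enters there as the identity $|x|=\lambda\big[1-(1-|x/\lambda|)_+\big]$ on the support of the integrand, which produces the Fej\'er kernel $\lambda^2\big(\tfrac{\sin\pi\lambda u}{\pi\lambda u}\big)^2$ explicitly before it is expanded into the exponentials you see on the right. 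Your approach is more self--contained for this particular lemma (no explicit formula, no sum over zeroes, no digamma computation) and packages the singularity at $u=0$ more transparently via $\mathrm{Pf}\,u^{-2}$; the paper's approach keeps the argument inside the same explicit--formula framework used for the surrounding lemmas and makes the appearance of the sine kernel (hence the GUE connection) visible at an intermediate stage rather than only implicitly through the Fourier side.
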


\textit{Remark: } It may seem odd that we have such freedom in choosing $\lambda$ in this Lemma. We are able to do so because our restrictions on $\alpha_1$ and $\alpha_2$ mean that we will not see a large part of the measure against which we integrate $r(\nu_1)r(\nu_2)e(\alpha_1 L \nu_1 + \alpha_2 L \nu_2)$. We will eventually want $\lambda = \log(T/2\pi)$ and $L = \log(T) + O(1)$, and it is worthwhile to keep this in mind.

\begin{proof}
The left hand side of \eqref{c} is
$$
\int_{-\infty}^\infty \big[\hat{r}(-x-\alpha_1 L)\hat{r}(x-\alpha_2 L) \tfrac{|x|}{e^{|x|/2}}+\hat{r}(x-\alpha_1 L)\hat{r}(-x-\alpha_2 L) \tfrac{|x|}{e^{|x|/2}}\big] e^{-x/2} d\psi(e^x).
$$
For $q(x) = \hat{r}(-x-\alpha_1 L)\hat{r}(x-\alpha_2 L) \tfrac{|x|}{e^{|x|/2}}$,
\begin{align*}
\hat{q}(\xi) =& \int_{-\infty}^\infty \int_{-\infty}^\infty r(\nu_1)r(\nu_2) e(\alpha_1 L \nu_1 + \alpha_2 L \nu_2)\\
&\;\;\;\;\;\;\;\;\;\cdot\bigg(\frac{1}{\big(\tfrac{1}{2}+i2\pi(\nu_1-\nu_2-\xi)\big)^2}+\frac{1}{\big(\tfrac{1}{2}-i2\pi(\nu_1-\nu_2-\xi)\big)^2}\bigg) \,d\nu_1d\nu_2
\end{align*}
so that by the explict formula the left hand side of \eqref{c} is
\begin{align*}
&\int_{-\infty}^\infty \big[\hat{r}(-x-\alpha_1 L)\hat{r}(x-\alpha_2 L) +\hat{r}(x-\alpha_1 L)\hat{r}(-x-\alpha_2 L) \big] |x| e^{-(\tfrac{x}{2}+\tfrac{|x|}{2})} d(e^x)\\ &\;\;\;\;\;\;\;\;- \int_{-\infty}^\infty \int_{-\infty}^\infty \int_{-\infty}^\infty r(\nu_1)r(\nu_2) e(\alpha_1L\nu_1 + \alpha_2L\nu_2)\\
&\;\;\;\;\;\;\;\;\;\;\;\;\;\cdot\bigg(\frac{1}{\big(\tfrac{1}{2}+i2\pi(\nu_1-\nu_2-\xi)\big)^2}+\frac{1}{\big(\tfrac{1}{2}-i2\pi(\nu_1-\nu_2-\xi)\big)^2} \bigg)d\nu_1d\nu_2 \,dS(\xi)
\end{align*}
The first of these terms is
$$
\int_{-\infty}^\infty |x| \hat{r}(-x-\alpha_1 L)\hat{r}(x-\alpha_2 L) \,dx + \int_{-\infty}^\infty |x| e^{-|x|}\hat{r}(-x-\alpha_1 L)\hat{r}(x-\alpha_2 L) \,dx,
$$
and because $\hat{r}$ is compactly supported and one of $|\alpha_1|$, $|\alpha_2|$ is no greater than $1-\epsilon$, for sufficiently large $(\lambda - (1-\epsilon)L)$ the first integral is equal to
\begin{align*}
&\int_{-\infty}^\infty \lambda\cdot\Big[1-\big(1-|\tfrac{x}{\lambda}|\big)_+\Big]\hat{r}(-x-\alpha_1 L)\hat{r}(x-\alpha_2 L)\,dx\\
& =  \int_{-\infty}^\infty \int_{-\infty}^\infty r(\nu_1)r(\nu_2)e(\alpha_1 L \nu_1 + \alpha_2 L \nu_2)\big[\lambda \delta(\nu_1-\nu_2)-\lambda^2\Big(\frac{\sin \pi \lambda(\nu_1-\nu_2)}{\pi \lambda(\nu_1-\nu_2)}\Big)^2\big]\,d\nu_1d\nu_2.
\end{align*}
It is worth remarking that we can make this transition only because of the restricted range of $\alpha_1, \alpha_2$.

Evaluation of the second integral is routine, and we have that the left hand side of \eqref{c} is
\begin{align}
\label{d}
&\int_{-\infty}^\infty \int_{-\infty}^\infty r(\nu_1)r(\nu_2)e(\alpha_1 L \nu_1 + \alpha_2 L \nu_2) \notag\\
&\;\;\;\;\;\;\;\;\;\cdot \Bigg[\lambda\delta(\nu_1-\nu_2) - \lambda^2\Big(\frac{\sin \pi (\nu_1-\nu_2)}{\pi (\nu_1-\nu_2)}\Big)^2+ \bigg(\frac{1}{\big(1+i2\pi(\nu_1-\nu_2)\big)^2}+\frac{1}{\big(1-i2\pi(\nu_1-\nu_2)\big)^2}\bigg)\\
&\;\;\;\;\;\;\;\;\;\;\;\;-\int_{-\infty}^\infty  \bigg(\frac{1}{\big(\tfrac{1}{2}+i2\pi(\nu_1-\nu_2-\xi)\big)^2} +\frac{1}{\big(\tfrac{1}{2}-i2\pi(\nu_1-\nu_2-\xi)\big)^2}\bigg)dS(\xi)\Bigg]d\nu_1d\nu_2.\notag
\end{align}
Note that for $s$ real, by Lemma \ref{Digamma} (or contour integration),
$$
\int_{-\infty}^\infty \bigg(\frac{1}{\big(\tfrac{1}{2}+i(s-\xi)\big)^2}+ \frac{1}{\big(\tfrac{1}{2}-i(s-\xi)\big)^2}\bigg)\frac{\Omega(\xi)}{2\pi}\,d\xi = -\frac{1}{4}\Big((\tfrac{\Gamma'}{\Gamma})'\big(\tfrac{1}{2}(1+is)\big) + (\tfrac{\Gamma'}{\Gamma})'\big(\tfrac{1}{2}(1-is)\big)\Big)
$$
Recall that for any $s$, (differentiating the classical representation 2.12.7 in \cite{Titch}),
\begin{equation}
\label{zetadiff}
\Big(\frac{\zeta'}{\zeta}\Big)'(s) = \frac{1}{(s-1)^2} + \frac{1}{s^2} - \frac{1}{4}(\tfrac{\Gamma'}{\Gamma})'\big(\tfrac{1}{2}s\big) -\sum_\gamma \frac{1}{\big(s-(\tfrac{1}{2}+i\gamma)\big)^2},
\end{equation}
On expanding $\big(\tfrac{\sin u}{u}\big)^2 = \tfrac{2}{(2u)^2}-\tfrac{e^{i2u}+e^{-i2u}}{(2u)^2}$, we see the expression \eqref{d} is just
\begin{align*}
&\int_{-\infty}^\infty\int_{-\infty}^\infty r(\nu_1)r(\nu_2)e(\alpha_1 L \nu_1 + \alpha_2 L \nu_2)\bigg[\lambda \delta(\nu_1-\nu_2) + \Big(\frac{\zeta'}{\zeta}\Big)'\big(1+i2\pi(\nu_1-\nu_2)\big)\\
&\;\;\;\;\;\;\;\;\;\;\;\;\;\;\;\;\;\;\;\;\;\;\; + \Big(\frac{\zeta'}{\zeta}\Big)'\big(1-i2\pi(\nu_1-\nu_2)\big) + \frac{e((\nu_1-\nu_2)\lambda)+ e(-(\nu_1-\nu_2)\lambda)}{(2\pi (\nu_1-\nu_2))^2}\bigg]d\nu_1d\nu_2
\end{align*}
\end{proof}

We need that the prediction in \eqref{c} does not differ so much from that in Theorem \ref{main}:

\begin{lem}
\label{offdiagonal}
For $h$ with $\hat{h}$ smooth and compactly supported, so long as $\lambda \geq (\tfrac{|\alpha_1|}{2}+\tfrac{|\alpha_2|}{2})L$,
\begin{align*}
&\int_{-\infty}^\infty\int_{-\infty}^\infty h(\nu_1,\nu_2)e(\alpha_1 L \nu_1 + \alpha_2 L \nu_2) \bigg[-\frac{e(-(\nu_1-\nu_2)\lambda)}{(2\pi(\nu_1-\nu_2))^2}\\
&\;\;\;\;\;+e(-(\nu_1-\nu_2)\lambda)\zeta(1-i2\pi(\nu_1-\nu_2)) \zeta(1+i2\pi(\nu_1-\nu_2))A(i2\pi(\nu_1-\nu_2))\bigg]d\nu_1d\nu_2\\
&\;\;\;\;= O_h\bigg(\frac{1}{e^{\lambda-(\tfrac{|\alpha_1|}{2}+\tfrac{|\alpha_2|}{2})L}}\bigg).
\end{align*}
\end{lem}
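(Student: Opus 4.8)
The plan is to exploit the fact that the bracketed combination of $1/(2\pi(\nu_1-\nu_2))^2$ and the Euler-product factor $\zeta(1-i2\pi u)\zeta(1+i2\pi u)A(i2\pi u)$ has, after multiplication by the oscillatory phase $e(-(\nu_1-\nu_2)\lambda)$, a Fourier transform in the variable $\nu_1-\nu_2$ that is supported far away from the support picked out by the band-limits on $\alpha_1,\alpha_2$. Concretely, I would write $u = \nu_1-\nu_2$ and change variables in the double integral to $u$ and $v = \nu_1+\nu_2$ (or just $\nu_1$); the $v$-integration is harmless because $\hat h$ is smooth and compactly supported, so it suffices to understand, for each fixed relevant frequency, the one-dimensional integral against $e(-u\lambda)e(\tfrac{\alpha_1-\alpha_2}{2}Lu)\,\Phi(u)$, where $\Phi(u) := -\tfrac{1}{(2\pi u)^2} + \zeta(1-i2\pi u)\zeta(1+i2\pi u)A(i2\pi u)$.

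The key observation is that $\Phi(u)$ is \emph{entire} — or at least analytic in a strip — near $u=0$: the double pole of $\zeta(1-i2\pi u)\zeta(1+i2\pi u)$ at $u=0$ is exactly $1/(2\pi u)^2$ (using $A(0)=1$ and $A(s) = 1 + O(s^2)$), so the subtraction removes it, and $\Phi$ extends analytically across the real axis with at worst polynomial growth coming from the $\zeta$-factors on the line $\mathrm{Re}=1$. More importantly, the Euler-product expansion of $\zeta(1+s)\zeta(1-s)A(s)$ shows that $\Phi(u)$, as a function of $u$, is a Dirichlet series in $\{n^{iu}\}$ (a convergent combination of $e(\tfrac{u}{2\pi}\log n)$-type terms) with no constant term and with the smallest frequency being $\log 2/2\pi > 0$. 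Hence $\widehat\Phi$ (Fourier transform in $u$, with the $e(\cdot)$ convention of the paper) is supported on a set bounded away from $0$ — in fact supported in $\{|x| \geq \log 2/2\pi\} \cup \{0\}$ with the $\{0\}$ part absent by the cancellation. I would make this rigorous by expanding $\zeta(1+s)\zeta(1-s)A(s) = \sum_{n} a(n)/n$ with $a(n)/n$ decaying and then noting $-1/(2\pi u)^2$ cancels the $a(1)/1 = 1$-driven pole, leaving a series whose terms are $e(\pm u\log n/2\pi)$ times rational functions that are themselves Fourier transforms of functions supported in $[\log n/2\pi - C, \infty)$ or similar; any remainder after truncating the series is $O(\text{power of } T^{-1})$.

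Given this, the integral over $u$ is the pairing of a function with Fourier support in $|x| \geq \log n/2\pi$ (for $n \geq 2$) against $h(\cdot)e(\alpha_1 L\nu_1 + \alpha_2 L\nu_2)$ whose relevant $u$-frequency content, after accounting for the phase $e(-u\lambda)$, lives near $\lambda - \tfrac{|\alpha_1| + |\alpha_2|}{2}L + O(1)$ — pushed to large positive values precisely under the hypothesis $\lambda \geq (\tfrac{|\alpha_1|}{2} + \tfrac{|\alpha_2|}{2})L$. Since $\hat h$ is smooth and compactly supported, integration by parts in $u$ arbitrarily many times (or, cleanly, the fact that the product of two functions whose Fourier supports are separated pairs to something we can bound by the tail of one of them) yields that the whole expression is bounded by the size of the arithmetic factor at frequency $\approx \lambda - \tfrac{|\alpha_1|+|\alpha_2|}{2}L$, namely $\sum_{n \geq e^{\lambda - (|\alpha_1|/2 + |\alpha_2|/2)L}} |a(n)|/n \cdot (\text{smooth decay}) = O_h\big(e^{-(\lambda - (|\alpha_1|/2+|\alpha_2|/2)L)}\big)$, using $\sum_{n \leq x} |a(n)| \ll x \log^{O(1)} x$ (which follows from $a = \Lambda * \Lambda$-type bounds, or crudely from absolute convergence of the product for $\mathrm{Re}\, s$ slightly positive) together with partial summation.

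The main obstacle I anticipate is handling the behavior of $\Phi(u)$ for large real $u$ — the $\zeta(1\pm i2\pi u)$ factors do not decay and in fact grow like a small power of $\log|u|$, so "$\Phi$ is the Fourier transform of something compactly supported away from $0$" must be interpreted distributionally, and the interchange of the $u$-integral with the Dirichlet-series expansion needs the convergence-improving device (Abel summation, or inserting the smooth cutoff inherited from $\hat h$) to be spelled out. Once one commits to expanding $\zeta(1+s)\zeta(1-s)A(s)$ as a Dirichlet series with coefficients $a(n)$ satisfying $a(1) = 1$ and $\sum_{n\le x}|a(n)| \ll x\,(\log x)^{C}$, and pairs term-by-term against the smooth compactly supported $\hat h$ shifted by $\lambda - \tfrac{\alpha_1-\alpha_2}{2}L$, the bound drops out; the $n=1$ term is exactly killed by $-1/(2\pi u)^2$ and every $n \geq 2$ term contributes only when $\log n/2\pi$ is within $O(1)$ of $\lambda - \tfrac{|\alpha_1|+|\alpha_2|}{2}L$, i.e. when $n \gtrsim e^{\lambda - (|\alpha_1|/2 + |\alpha_2|/2)L}$, giving the stated error.
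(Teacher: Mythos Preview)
Your strategy --- reduce to a one-variable integral in $u=\nu_1-\nu_2$, note that the factor $e(-u\lambda)$ shifts the effective Fourier support of the test function by $\lambda$, and then argue that the ``Fourier content'' of $\Phi(u):=-\tfrac{1}{(2\pi u)^2}+\zeta(1-i2\pi u)\zeta(1+i2\pi u)A(i2\pi u)$ at that shifted location is $O(e^{-P})$ with $P=\lambda-(\tfrac{|\alpha_1|}{2}+\tfrac{|\alpha_2|}{2})L$ --- is exactly the paper's strategy. But two of your intermediate claims are wrong, and fixing them is precisely where the substance of the paper's proof lies.

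First, $\widehat{\Phi}$ is \emph{not} supported in $\{|x|\geq \log 2/2\pi\}$. The formal expansion $\zeta(1+i2\pi u)\zeta(1-i2\pi u)=\sum_{m,n}(mn)^{-1}e(u\log(n/m))$ has frequencies $\log(n/m)$, which are dense in $\mathbb{R}$ (think $\log(3/2),\log(4/3),\dots$), not just $\pm\log n$. The paper's Proposition~\ref{form} makes this precise: $\widehat{\Phi_0}$ (with $A\equiv 1$) is $\delta_0$ plus a genuinely continuous measure supported on all of $\mathbb{R}$, with density at $x$ coming from pairs $(y,x')$ with $\log(y/x')=x$. What you actually need, and what is true, is that this density is $O(e^{-|x|})$ for large $|x|$; that is the content of the bound $\int_1^\infty \lfloor y\rfloor (x'-\lfloor x'\rfloor)\,\frac{dx'}{x'^2}\frac{dy}{y^2}$ restricted to $x'\asymp ye^P$.

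Second, and relatedly, your proposed expansion ``$\zeta(1+s)\zeta(1-s)A(s)=\sum_n a(n)/n$'' with a tail bound $\sum_{n\geq e^P}|a(n)|/n$ does not make sense: the left side is not a Dirichlet series in one direction, and in the honest double-sum picture the tail $\sum_{n/m\asymp e^P}(mn)^{-1}$ diverges (for each $m$ there are $\asymp me^P$ choices of $n$, contributing $\asymp 1/m$). The saving cancellation --- which your pole-subtraction intuition correctly anticipates --- is that the relevant density involves $x'-\lfloor x'\rfloor$ rather than $x'$. This is exactly what the paper extracts by writing $\zeta(s)/s=-\int_0^\infty(x-\lfloor x\rfloor)x^{-s-1}\,dx$ in the strip $0<\Re s<1$ and passing to the limit $\epsilon\to 0^+$ in $\zeta(1+\epsilon+i2\pi u)\zeta(1-\epsilon-i2\pi u)$; this \emph{is} the Abel summation you allude to, but it has to be done at the level of the product, and the output is Proposition~\ref{form}, not a one-sided Dirichlet series. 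Once that formula is in hand, the $A$-factor is handled exactly as you say, by its absolutely convergent expansion $A(s)=\sum_n \mu(n)\phi(n)^{-2}\sum_{d,\delta\mid n}\mu(d)\mu(\delta)(d\delta)^{-s}$, and the piece $(A(i2\pi u)-1)/(2\pi u)^2$ is disposed of separately since $A-1$ genuinely does have Fourier support bounded away from~$0$.
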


We note that by symmetry, the integral of $h(\nu_1,\nu_2)e(\alpha_1 L \nu_1 + \alpha_2 L \nu_2)$ with respect to the conjugate measure will be similarly bounded. And of course, for $\lambda < (\tfrac{|\alpha_1|}{2}+\tfrac{|\alpha_2|}{2})L$ this estimate is true, but trivially.

Plainly, to prove this we need only show that for $f$ Schwartz on $\mathbb{R}$ with compact Fourier support, and $P \geq 0$,
\begin{equation}
\label{x}
\int_{-\infty}^\infty f(u) e(-Pu) \bigg(-\frac{1}{4\pi^2u^2} + \zeta(1-i2\pi u)\zeta(1+i2\pi u) A(i 2\pi u)\bigg) du = O_f\Big(\frac{1}{e^P}\Big).
\end{equation}

We will use the formula
\begin{prop}
\label{form}
For $f$ Schwartz on $\mathbb{R}$,
\begin{align*}
&\int_{-\infty}^\infty f(u)\Big(-\frac{1}{4\pi^2u^2} + \zeta(1-i2\pi u)\zeta(1+i2\pi u)\Big)\,du \\
&\;= \hat{f}(0) + \int_1^\infty\int_0^\infty (\hat{f}+\hat{f}'')\Big(\log\big(\tfrac{y}{x}\big)\Big)\cdot\big[\mathbf{1}_{[0,1]}(x)\cdot xy - \lfloor y \rfloor (x-\lfloor x\rfloor)\big] \,\frac{dx}{x^2}\frac{dy}{y^2}
\end{align*}
\end{prop}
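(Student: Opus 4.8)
The plan is to substitute a classical integral representation of $\zeta$ on the line $\Re s = 1$ into the product, isolate the double pole, and then integrate the remainder against $f$ term by term. I would start from $\zeta(s)/s = \tfrac{1}{s-1} - \int_1^\infty \{x\}\,x^{-s-1}\,dx$ (valid for $\Re s > 0$; equivalently Titchmarsh 2.1.x), set $s = 1\pm i2\pi u$, write $\phi_\pm(u) := \int_1^\infty \{x\}\,x^{-2\mp i2\pi u}\,dx$, and use $\tfrac{1}{i2\pi u}\cdot\tfrac{-1}{i2\pi u} = \tfrac{1}{4\pi^2u^2}$ to obtain the pointwise identity
$$ -\frac{1}{4\pi^2u^2} + \zeta(1-i2\pi u)\zeta(1+i2\pi u) \;=\; 1 + (1+4\pi^2u^2)\Big(\frac{\phi_+(u)-\phi_-(u)}{i2\pi u} + \phi_+(u)\phi_-(u)\Big). $$
Both sides are analytic at $u=0$: the apparent simple pole of the cross term cancels because $\zeta(s)/s = \tfrac{1}{s-1} + (\gamma-1) + O(s-1)$ has the same residue and constant term at $s = 1\pm i2\pi u$, so no principal value is needed. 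Integrating against $f$, the constant $1$ contributes $\int f = \hat f(0)$, matching the leading term on the right. It is worth noting in passing that on $0<\Re s<1$ one has $1/(s-1) = -\int_0^1\{x\}x^{-s-1}\,dx$, so the representation really extends to $\int_0^\infty$ there; the boundary term $\mathbf 1_{[0,1]}(x)\,xy$ in the final formula is the echo of this extended range for the first factor.

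For the two remaining terms I would substitute the integral definitions of $\phi_\pm$ and interchange the $u$-integral with the $x$- and $y$-integrals. This is legitimate: after $x = e^{\tau}$, each $\phi_\pm$ is the Fourier transform of an $L^1$ function supported on $\tau>0$ with $e^{-\tau}O(1)$ decay; the factor $1/(i2\pi u)$ is tamed via $\tfrac{1}{i2\pi u}(x^{-i2\pi u}-x^{i2\pi u}) = -\int_{-\log x}^{\log x}e(ut)\,dt$, equivalently by observing that $(\phi_+-\phi_-)/(i2\pi u)$ is the Fourier transform of a primitive of an odd $L^1$ function of total integral $0$ (hence again $L^1$); and $\int f(u)(1+4\pi^2u^2)e(u\tau)\,du$ is Schwartz in $\tau$, supplying absolute convergence throughout. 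In this way the $\phi_+\phi_-$ term becomes $\int_1^\infty\int_1^\infty (\hat f+\hat f'')(\log(y/x))\tfrac{\{x\}\{y\}}{x^2y^2}\,dx\,dy$ and the $(\phi_+-\phi_-)/(i2\pi u)$ term becomes $-\int_1^\infty\tfrac{\{x\}}{x^2}\big(\int_{-\log x}^{\log x}(\hat f+\hat f'')(t)\,dt\big)dx$.

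It then remains to fold these two double integrals into the single expression on the right. The two key moves are an Abel summation in $y$, $\int_1^\infty\{y\}(\cdots)\tfrac{dy}{y^2} = \int_1^\infty(\cdots)\tfrac{dy}{y} - \int_1^\infty\lfloor y\rfloor(\cdots)\tfrac{dy}{y^2}$, which manufactures the $-\lfloor y\rfloor\{x\}$ shape, and the substitution $x = e^{-a}$ in the second double integral, which reinterprets $-\int_1^\infty\tfrac{\{x\}}{x^2}\int_{-\log x}^{\log x}(\cdots)\,dt\,dx$ as an integral over $x\in(0,\infty)$; its part over $x\in(0,1)$, together with the remainder term produced by the Abel summation, collapses to the boundary term $\mathbf 1_{[0,1]}(x)\,xy$. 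Assembling the pieces — and checking that the conditionally convergent intermediate integrals recombine into the absolutely convergent right-hand side, convergence being supplied by the Schwartz decay of $\hat f$ as $\log(y/x)\to\pm\infty$ — yields the Proposition.

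I expect the main obstacle to be exactly this last recombination step: the contributions of $(\phi_+-\phi_-)/(i2\pi u)$ and of the Abel-summation remainder are individually only conditionally convergent and must be combined with care, so that they collapse precisely into $\mathbf 1_{[0,1]}(x)\,xy - \lfloor y\rfloor\{x\}$ with no residual term and with the correct differential operator and arguments in the kernel; keeping the Fourier normalization $\hat f(\xi) = \int e(-x\xi)f(x)\,dx$ straight throughout this bookkeeping is the delicate part. Everything else is routine manipulation of absolutely convergent integrals.
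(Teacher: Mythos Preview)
Your approach is correct but takes a genuinely different route from the paper. You work directly on the line $\Re s = 1$, using the single representation $\zeta(s)/s = 1/(s-1) - \int_1^\infty\{x\}x^{-s-1}\,dx$ for both factors; this yields your pointwise decomposition and then forces the recombination you describe (the splitting $\{y\}=y-\lfloor y\rfloor$ in the $\phi_+\phi_-$ term together with the substitution in the cross term) to reassemble the kernel $\mathbf 1_{[0,1]}(x)\,xy - \lfloor y\rfloor\{x\}$. The paper instead regularizes by shifting off the line: it evaluates the product at $s=1+\epsilon+i2\pi u$ and $1-\epsilon-i2\pi u$, so that one factor sits in $\Re s>1$ (where $\zeta(s)/s=\int_1^\infty\lfloor y\rfloor\,y^{-s-1}\,dy$ and $1/(s-1)=\int_1^\infty y\cdot y^{-s-1}\,dy$) and the other in $0<\Re s<1$ (where $\zeta(s)/s=-\int_0^\infty\{x\}\,x^{-s-1}\,dx$ and $1/(s-1)=-\int_0^1 x\cdot x^{-s-1}\,dx$). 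Multiplying these \emph{asymmetric} representations and subtracting the pole term produces the bracket $\mathbf 1_{[0,1]}(x)\,xy - \lfloor y\rfloor\{x\}$ in one stroke; one then integrates against $f$, sends $\epsilon\to 0^+$ by dominated convergence, and finally replaces $f$ by $(1+4\pi^2u^2)f$ (this last substitution is also where the $\hat f(0)$ appears, since $-(1+4\pi^2u^2)/(4\pi^2u^2) = -1/(4\pi^2u^2)-1$). The paper's route is shorter and sidesteps your end-stage bookkeeping, at the price of the limit $\epsilon\to 0^+$; your route never leaves the critical line but must rebuild the asymmetric kernel by hand. Incidentally, your worry about conditional convergence in the recombination is overstated: once $G=\hat f\pm\hat f''$ is Schwartz, each intermediate integral you write down is absolutely convergent, so Fubini applies cleanly.
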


\begin{proof}[Proof of Proposition \ref{form}]
Note that
$$
\frac{\zeta(s)}{s} = \int_1^\infty \frac{\lfloor x \rfloor}{x^{s+1}} dx \;\;\; \textrm{ for } \Re s > 1,
$$
$$
\frac{\zeta(s)}{s} = -\int_0^\infty \frac{x-\lfloor x \rfloor}{x^{s+1}} dx \;\;\; \textrm{ for } 0 < \Re s < 1,
$$
$$
\frac{1}{s-1} = \int_1^\infty \frac{x}{x^{s+1}} dx \;\;\; \textrm{ for }  \Re s > 1,
$$
$$
\frac{1}{s-1} = -\int_0^1 \frac{x}{x^{s+1}} dx \;\;\; \textrm{ for } 0 < \Re s < 1.
$$
Therefore for $\epsilon > 0$ and $u$ real,
\begin{align*}
&\frac{\zeta(1+\epsilon+i2\pi u)\zeta(1-\epsilon-i 2\pi u)}{1-(\epsilon + i2\pi u)^2}-\frac{1}{-\epsilon-i2\pi u}\frac{1}{\epsilon + i2\pi u}\\
&\;= \int_1^\infty \int_0^\infty \big[\mathbf{1}_{[0,1]}(x)\cdot xy - \lfloor y \rfloor (x-\lfloor x\rfloor)\big] \Big(\frac{x}{y}\Big)^{\epsilon + i2\pi u} \, \frac{dx}{x^2}\frac{dy}{y^2}
\end{align*}
This expression will remain bounded for $u$ near $0$ as $\epsilon \rightarrow 0^+$. If we fix $\epsilon$ and integrate in $u$ with respect to $f(u)du$, the the bottom line becomes
$$
\int_1^\infty\int_0^\infty \hat{f}\Big(\log\big(\tfrac{y}{x}\big)\Big)\cdot\Big(\frac{x}{y}\Big)^\epsilon \big[\mathbf{1}_{[0,1]}(x)\cdot xy - \lfloor y \rfloor (x-\lfloor x\rfloor)\big] \,\frac{dx}{x^2}\frac{dy}{y^2},
$$
where we are justified in changing the order of integration by a trivial application of Fubini's theorem. Now letting $\epsilon \rightarrow 0^+$, we have
\begin{align*}
&\int_{-\infty}^\infty f(u)\bigg(-\frac{1}{4\pi^2u^2} + \frac{\zeta(1-i2\pi u)\zeta(1+i2\pi u)}{1+4\pi^2u^2}\bigg)\,du \\
&\;= \int_1^\infty\int_0^\infty \hat{f}\Big(\log\big(\tfrac{y}{x}\big)\Big)\cdot\big[\mathbf{1}_{[0,1]}(x)\cdot xy - \lfloor y \rfloor (x-\lfloor x\rfloor)\big] \,\frac{dx}{x^2}\frac{dy}{y^2}.
\end{align*}
Replacing $f(u)$ with $(1+4\pi^2u^2)f(u)$ completes the proof.
\end{proof}

\begin{proof}[Proof of Lemma \ref{offdiagonal}]
Returning to Lemma \ref{offdiagonal}, we note that
$$
A(s) = \sum_{n=1}^\infty \frac{\mu(n)}{\phi(n)^2}\sum_{d,\delta | n}\mu(d)\mu(\delta)(d\delta)^{-s}.
$$
For $f$ of compact Fourier support, and $P$ a sufficiently large positive number
\begin{align*}
&\int_{-\infty}^\infty f(u)e(-Pu)\Big(-\frac{1}{4\pi^2u^2} + \zeta(1-i2\pi u)\zeta(1+i2\pi u)\Big)\,du \\
&\;=\hat{f}(P)+ \int_1^\infty\int_0^\infty [\hat{f}+\hat{f}'']\big(\log\big(\tfrac{y}{x}\big)+P\big)\cdot\big[\mathbf{1}_{[0,1]}(x)\cdot xy - \lfloor y \rfloor (x-\lfloor x\rfloor)\big] \,\frac{dx}{x^2}\frac{dy}{y^2}\\
&\;=O_f\bigg(\int_1^\infty \frac{1}{e^p y} \frac{\lfloor y \rfloor}{y^2} dy\bigg)\\
&\;= O_f\Big(\frac{1}{e^P}\Big),
\end{align*}
since, going from the second line to the third, $\log(x) = \log(y) + P + O(1)$, otherwise $[\hat{f}+\hat{f}''](\log(y/x)+P)$ is null. Hence,
\begin{align}
\label{e}
&\int_{-\infty}^\infty f(u)A(i2\pi u)e(-Pu)\Big(-\frac{1}{4\pi^2u^2} + \zeta(1-i2\pi u)\zeta(1+i2\pi u)\Big)\,du\notag\\
&\; \lesssim_f \frac{1}{e^P} \sum_{n=1}^\infty \frac{\mu(n)}{\phi(n)^2}\sum_{d,\delta|n}\frac{\mu(d)\mu(\delta)}{d\delta}\\
&\; \lesssim_f \frac{1}{e^P}\notag
\end{align}

Finally,
\begin{align*}
\int_{-\infty}^\infty f(u) (A(i2\pi u)-1) e(-xu) du  &= \sum_{n > 1} \frac{\mu(n)}{\phi(n)^2}\sum_{d,\delta|n}\mu(d)\mu(\delta)\hat{f}(x+\log(d\delta))\\
&= 0
\end{align*}
for sufficiently large positive $x$. Hence
\begin{align*}
\int_{-\infty}^\infty \frac{f(u)(A(i2\pi u)-1)}{(2\pi u)^2} e(-Pu)\, du &= \int_0^\infty \int_x^\infty \int_{-\infty}^\infty f(u) (A(i2\pi u)-1) e(-Pu-yu)\, du\, dy \,dx\\
& = 0,
\end{align*}
for sufficiently large $P$. Combining this with \eqref{e} proves the lemma.
\end{proof}

These lemmas give us very accurate information about the statistics of $dS(\xi_1)dS(\xi_2)$. Our last two lemmas will allow us to unravel this product into the pair correlation measure we are after.

\begin{lem}
\label{1level}
For $r$ and $\sigma$ with $\hat{r}, \hat{\sigma}$ smooth and compactly supported,
$$
\frac{1}{H}\int_\mathbb{R} \sigma\Big(\tfrac{t-T}{H}\Big) \int_{-\infty}^\infty h\Big(\tfrac{\xi-t}{2\pi}\Big) e\big(\alpha \tfrac{L}{2\pi} (\xi-t)\big) \,dS(\xi) \,dt = O_{h, \sigma}\Big(\frac{1}{H}\Big),
$$
uniformly in $L$.
\end{lem}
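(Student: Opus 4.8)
The plan is to run the proof of Lemma \ref{6} with a single factor in place of two. Fixing $t$, I would first write the inner integral over $\xi$ as $\int_{-\infty}^\infty \hat{g}\big(\tfrac{\xi}{2\pi}\big)\,dS(\xi)$, where $g(x):=e\big(\tfrac{xt}{2\pi}\big)\hat{h}\big(-(x+\alpha L)\big)$ has $\hat{g}\big(\tfrac{\xi}{2\pi}\big)=h\big(\tfrac{\xi-t}{2\pi}\big)e\big(\alpha\tfrac{L}{2\pi}(\xi-t)\big)$; since $\hat{h}$ is smooth and compactly supported, $g$ is smooth and compactly supported, so the explicit formula applies and turns the inner integral into $\int_{-\infty}^\infty[g(x)+g(-x)]e^{-x/2}\,d(e^x-\psi(e^x))$. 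I would then average in $t$ against $\tfrac{1}{H}\sigma\big(\tfrac{t-T}{H}\big)$, interchange the $t$- and $x$-integrals by Fubini (legitimate since $g(x)+g(-x)$ is supported in a fixed compact $x$-set), and use $\tfrac{1}{H}\int_\mathbb{R}\sigma\big(\tfrac{t-T}{H}\big)e\big(\pm\tfrac{xt}{2\pi}\big)\,dt=e\big(\pm\tfrac{xT}{2\pi}\big)\hat{\sigma}\big(\mp\tfrac{Hx}{2\pi}\big)$ exactly as in Lemma \ref{6}, to reduce the left-hand side of the lemma to
$$
\sum_{\varepsilon\in\{-1,1\}}\int_{-\infty}^\infty e\big(-\varepsilon\tfrac{Tx}{2\pi}\big)\,\hat{\sigma}\big(\tfrac{\varepsilon Hx}{2\pi}\big)\,\hat{h}(\varepsilon x-\alpha L)\,e^{-x/2}\,d\big(e^x-\psi(e^x)\big).
$$

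Next I would split $d(e^x-\psi(e^x))=d(e^x)-d\psi(e^x)$ and bound the two pieces. The $d\psi(e^x)$ piece is, up to unimodular factors, $\sum_{n\ge1}\hat{\sigma}\big(\tfrac{\varepsilon H\log n}{2\pi}\big)\hat{h}(\varepsilon\log n-\alpha L)\tfrac{\Lambda(n)}{\sqrt{n}}$ summed over $\varepsilon$; the compact support of $\hat{\sigma}$ restricts this to $\log n=O_\sigma(1/H)$, so once $H$ exceeds a constant depending only on $\sigma$ the only candidate is $n=1$, where $\Lambda(1)=0$, and the piece vanishes identically. (For the residual range of $H\ge1$ below that threshold the sum has a bounded number of terms, each $O_{h,\sigma}(1)$, which is $O_{h,\sigma}(1/H)$ since $1/H$ is then bounded below.) For the $d(e^x)$ piece one has $e^{-x/2}\,d(e^x)=e^{x/2}\,dx$; I would bound it by $\sum_\varepsilon\big(\sup_{|x|\le 2\pi c_\sigma/H}e^{x/2}\big)\|\hat{h}\|_\infty\int_\mathbb{R}\big|\hat{\sigma}\big(\tfrac{Hx}{2\pi}\big)\big|\,dx$, where $[-c_\sigma,c_\sigma]$ contains the support of $\hat{\sigma}$, and note $\int_\mathbb{R}\big|\hat{\sigma}\big(\tfrac{Hx}{2\pi}\big)\big|\,dx=\tfrac{2\pi}{H}\|\hat{\sigma}\|_{L^1}$; assuming, as we may, that $H\ge1$, the supremum is at most $e^{\pi c_\sigma}=O_\sigma(1)$, so this piece is $O_{h,\sigma}(1/H)$, uniformly in $L$, $T$, and $\alpha$. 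Combining the two pieces proves the lemma.

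The one step that is not mere bookkeeping is controlling the exponentially growing weight $e^{x/2}$ attached to the Lebesgue part $d(e^x)=e^x\,dx$. Naively this weight looks dangerous, but the point is that the smoothing factor $\hat{\sigma}\big(\tfrac{\varepsilon Hx}{2\pi}\big)$ — compactly supported because $\hat{\sigma}$ is — confines $x$ to a neighbourhood of the origin (of radius $O(1/H)$), where $e^{x/2}\approx1$; this neutralises the weight and leaves only the $1/H$ coming from the $L^1$ mass of the dilated bump $\hat{\sigma}\big(\tfrac{Hx}{2\pi}\big)$. The companion fact, that $d\psi(e^x)$ puts no mass near $x=0$ because $\Lambda(1)=0$, is what extinguishes the prime-sum piece once $H$ is large. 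Everything else — the appeal to the explicit formula, the Fubini interchange, and the elementary Fourier identities — has already been carried out in the proof of Lemma \ref{6}.
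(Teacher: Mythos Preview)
Your proof is correct and follows the same route as the paper's: apply the explicit formula to the inner integral, swap the $t$- and $x$-integrals to produce the factor $\hat{\sigma}\big(\tfrac{Hx}{2\pi}\big)$, and then use the compact support of $\hat{\sigma}$ to force $x=O(1/H)$, which kills the $d\psi(e^x)$ contribution (only $n=1$ survives, and $\Lambda(1)=0$) and leaves $O(1/H)$ from the Lebesgue part. The paper compresses these last two estimates into a single line, whereas you have written out the split $d(e^x)-d\psi(e^x)$ and the bounds on each piece explicitly; otherwise the arguments are identical.
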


\textit{Remark: } This is in essence a 1-level density estimate.

\begin{proof} We have by the explicit formula,
\begin{align*}
&\frac{1}{H}\int_\mathbb{R} \sigma\Big(\tfrac{t-T}{H}\Big) \int_{-\infty}^\infty h\Big(\tfrac{\xi-t}{2\pi}\Big) e\big(\alpha \tfrac{L}{2\pi} (\xi-t)\big) \,dS(\xi) \,dt \\
&\;= \frac{1}{H}\int_\mathbb{R} \sigma\Big(\tfrac{t-T}{H}\Big)\int_{-\infty}^\infty \big[e\big(-\tfrac{xt}{2\pi}\big)\hat{h}(x-\alpha L) + e\big(\tfrac{xt}{2\pi}\big) \hat{h}(-(x+\alpha L))\big] e^{-x/2} \,d\big(e^x-\psi(e^x)\big)\\
&\;\lesssim_h \int_{-\infty}^\infty \hat{\sigma}\big(\tfrac{Hx}{2\pi}\big)e^{-x/2}\,d\big(e^x-\psi(e^x)\big)\\
&\;\lesssim_{h,\sigma}\frac{1}{H}.
\end{align*}
\end{proof}

\begin{lem}
\label{11}
For $r$ and $\sigma$ with $\hat{r}, \hat{\sigma}$ smooth and compactly supported,
\begin{align*}
&\frac{1}{H}\int_\mathbb{R}\sigma\Big(\tfrac{t-T}{H}\Big)\int_{-\infty}^\infty\int_{-\infty}^\infty r(\xi_1-t)r(\xi_2-t)e\big(\alpha_1 \tfrac{L}{2\pi}(\xi_1-t) + \alpha_2 \tfrac{L}{2\pi}(\xi_2-t)\big) dS(\xi_1)\, \frac{\Omega(\xi_2)}{2\pi}\,d\xi_2 \,dt \\
 &\;\;\;\;\;\;\;\;\;= O_{r,\sigma}\Big(\frac{e^{|\tfrac{\alpha_1}{2}|L}}{H}\Big)
\end{align*}
\end{lem}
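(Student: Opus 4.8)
The plan is to follow the proof of Lemma~\ref{1level} almost verbatim, treating the $\Omega$-weighted integral in $\xi_2$ as a slowly varying amplitude. Write the left side as $\frac1H\int_{\mathbb R}\sigma(\tfrac{t-T}H)\,G(t)\,J(t)\,dt$ with
\[
G(t):=\int_{-\infty}^\infty r(\xi_2-t)\,e\big(\alpha_2\tfrac L{2\pi}(\xi_2-t)\big)\tfrac{\Omega(\xi_2)}{2\pi}\,d\xi_2,\qquad J(t):=\int_{-\infty}^\infty r(\xi_1-t)\,e\big(\alpha_1\tfrac L{2\pi}(\xi_1-t)\big)\,dS(\xi_1).
\]
Substituting $\xi_2=t+v$ and invoking Stirling's formula for $\Omega$ (so $\Omega(\xi)\ll\log(|\xi|+2)$ and $\Omega^{(k)}(\xi)\ll_k(|\xi|+2)^{-k}$ for $k\ge1$) together with the Schwartz decay of $r$, one gets $|G(t)|\lesssim_r\log(|t|+2)$ and $|G^{(k)}(t)|\lesssim_{r,k}(|t|+2)^{-1}$ for $k\ge1$: $G$ is a slowly varying weight of logarithmic size. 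Consequently $\psi_T(s):=\sigma(s)\,G(T+Hs)$ is Schwartz, and since $H\le T$ forces $H^mG^{(m)}(T+Hs)=O(1)$ on the bulk $|s|\lesssim T/H$ while $\sigma$ dominates the tail, $\|\psi_T^{(k)}\|_{L^1}\lesssim_{r,\sigma,k}\log T$ for every $k$, hence $|\hat\psi_T(\xi)|\lesssim_k\log T\,(1+|\xi|)^{-k}$.

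Now apply the explicit formula to $J(t)$ (justified since the relevant $g$ has compact support, as in Lemma~\ref{1level}), so that $J(t)=\int\big[e(-\tfrac{xt}{2\pi})\hat r(x-\alpha_1L)+e(\tfrac{xt}{2\pi})\hat r(-(x+\alpha_1L))\big]e^{-x/2}\,d(e^x-\psi(e^x))$; substitute this into the $t$-average and carry out the $t$-integration first (Fubini is legitimate, since $\hat r$ is compactly supported and $\sigma G$ is integrable). Because $\frac1H\int\sigma(\tfrac{t-T}H)G(t)\,e(\mp\tfrac{xt}{2\pi})\,dt=e(\mp\tfrac{xT}{2\pi})\,\hat\psi_T(\pm\tfrac{xH}{2\pi})$, the left side becomes
\[
\int_{-\infty}^\infty\big[e(-\tfrac{xT}{2\pi})\hat\psi_T(\tfrac{xH}{2\pi})\hat r(x-\alpha_1L)+e(\tfrac{xT}{2\pi})\hat\psi_T(-\tfrac{xH}{2\pi})\hat r(-(x+\alpha_1L))\big]e^{-x/2}\,d(e^x-\psi(e^x)).
\]
For the $d(e^x)$ term, reversing the interchange gives $\frac1H\int\sigma(\tfrac{t-T}H)G(t)\,\mathcal R(t)\,dt$ where $\mathcal R(t)=\int[\hat r(x-\alpha_1L)e(-\tfrac{xt}{2\pi})+\hat r(-(x+\alpha_1L))e(\tfrac{xt}{2\pi})]e^{x/2}\,dx$; translating $x$ shows $\mathcal R(t)$ is, up to a unimodular factor, $e^{|\alpha_1|L/2}$ times the Fourier transform at $\pm t/2\pi$ of a fixed smooth compactly supported function, so $|\mathcal R(t)|\lesssim_{r,N}e^{|\alpha_1|L/2}(1+|t|)^{-N}$ for all $N$. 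Since the mass of $\mathcal R$ sits near $t=0$ while $\frac1H\sigma(\tfrac{t-T}H)$ is concentrated near $t=T$ and $H\le T$, substituting $t=T+Hs$ and splitting at $|T+Hs|\asymp T/2$ (using the Schwartz decay of both $\mathcal R$ and $\sigma$) bounds this term by $O_{r,\sigma}(e^{|\alpha_1|L/2}/H)$.

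The $d\psi(e^x)$ term equals $-\sum_{n\ge2}\big[\hat\psi_T(\tfrac{(\log n)H}{2\pi})\hat r(\log n-\alpha_1L)e(-\tfrac{(\log n)T}{2\pi})+\hat\psi_T(-\tfrac{(\log n)H}{2\pi})\hat r(-(\log n+\alpha_1L))e(\tfrac{(\log n)T}{2\pi})\big]\tfrac{\Lambda(n)}{\sqrt n}$. The compact support of $\hat r$ localizes the sum to $\log n=|\alpha_1|L+O(1)$ (and empties it, so the term vanishes, unless $|\alpha_1|L$ is bounded below); for such $n$ one has $\log n\ge\log2$, so $|\hat\psi_T(\pm\tfrac{(\log n)H}{2\pi})|\lesssim_k\log T\,((\log n)H)^{-k}\lesssim_k\log T\,H^{-k}$, while $\sum_{\log n=|\alpha_1|L+O(1)}\Lambda(n)/\sqrt n\lesssim e^{|\alpha_1|L/2}$ by Chebyshev's estimate (in fact $\asymp e^{|\alpha_1|L/2}$, which is the source of that factor). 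Thus this term is $\lesssim_k\log T\,H^{-k}e^{|\alpha_1|L/2}$, which for $H$ not too small (take $k$ large and use $\log n\gg1$) is $O_{r,\sigma}(e^{|\alpha_1|L/2}/H)$, matching the $e^{|\tfrac{\alpha_1}{2}|L}/H$ of the statement; the leftover small-$H$ range is dealt with crudely, as in Lemma~\ref{6}, since the explicit formula gives $|J(t)|\lesssim_r e^{|\alpha_1|L/2}$ pointwise and hence the left side is always $\lesssim_{r,\sigma}e^{|\alpha_1|L/2}\log T$. I expect this last step to be the crux: the prime sum is of size $e^{|\alpha_1|L/2}$ with no cancellation available to exploit, so the whole $1/H$ saving must come from the smoothed average, and it is exactly the fact that $G$ is slowly varying (its derivatives gaining a full power of $|t|^{-1}$, not merely $H^{-1}$) that makes $\sigma(\tfrac{\cdot-T}H)G$ smooth enough for $\hat\psi_T$ to decay rapidly, the logarithmic size of $G$ costing only a harmless $\log T$.
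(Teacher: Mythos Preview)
Your approach is genuinely different from the paper's and, with a repair, does work. The paper does not treat $G(t)$ as a slowly varying amplitude; instead it invokes Lemma~\ref{Digamma} to write $G(t)$ explicitly as a constant (in $t$) multiple of $\hat r(-\alpha_2 L)$ plus an oscillatory integral of the form $\int_0^\infty \frac{e^{-y/4}}{1-e^{-y}}\,[\cdots e(\pm yt/2\pi)\cdots]\,dy$. The constant piece reduces to Lemma~\ref{1level}; for the oscillatory piece, combining with the explicit formula for $J(t)$ and carrying out the $t$-average produces $\hat\sigma\big(\tfrac{H}{2\pi}(\varepsilon_1 x+\varepsilon_2 y)\big)$, whose \emph{compact support} (rather than rapid decay) delivers the bound directly, with no hypothesis relating $H$ to $T$. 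Your route is more flexible---it would apply with any weight in place of $\Omega/2\pi$ whose derivatives decay suitably---at the cost of tracking $H$ against $T$.

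However, your argument has a gap as written. The assertion that $H^m G^{(m)}(T+Hs)=O(1)$ on ``the bulk $|s|\lesssim T/H$'' is false when $H$ is comparable to $T$: the point $s=-T/H$ lies in that region, and there $T+Hs=0$, so $H^m G^{(m)}(T+Hs)\asymp H^m$; the Schwartz decay of $\sigma$ does not help since $|{-T/H}|$ is then $O(1)$. Hence $\|\psi_T^{(k)}\|_{L^1}\lesssim\log T$ fails for $k\ge2$ in the allowed regime $H\asymp T$. The fix is short: first sharpen your derivative bound to $|G^{(m)}(t)|\lesssim_m(|t|+2)^{-m}$ (this follows from $\Omega^{(m)}(\xi)\ll(|\xi|+2)^{-m}$ by the same Stirling argument), and then note that the correct estimate is $\|\psi_T^{(k)}\|_{L^1}\lesssim_k\log T+H^{k-1}$. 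This still gives $|\hat\psi_T(\xi)|\lesssim_k(\log T+H^{k-1})|\xi|^{-k}$, and since in your prime sum $|\xi|\ge \tfrac{H\log 2}{2\pi}$, one obtains $|\hat\psi_T|\lesssim H^{-1}+H^{-k}\log T=O(H^{-1})$ for any $k\ge2$, which suffices for the claimed $O(e^{|\alpha_1|L/2}/H)$. (Incidentally, for the $d(e^x)$ piece the crude bound $|\sigma|\le\|\sigma\|_\infty$ already yields $\tfrac1H\int|G(t)\mathcal R(t)|\,dt\lesssim e^{|\alpha_1|L/2}/H$, since $\log(|t|+2)(1+|t|)^{-2}$ is integrable; no splitting and no use of $H\le T$ is needed there.)
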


\begin{proof} By Lemma \ref{Digamma},
\begin{align*}
&\int_{-\infty}^\infty r\Big(\tfrac{\xi_2-t}{2\pi}\Big) e\big(\alpha_2\tfrac{L}{2\pi}(\xi_2-t)\big) \frac{\Omega(\xi_2)}{2\pi} \, d\xi_2\\
&\;=\Big(\tfrac{\Gamma'}{\Gamma}\Big(\tfrac{1}{4}\Big)-\log \pi\Big)\hat{r}(-\alpha_2 L)\\
 &\;\;\;\;\;+ \int_0^\infty \frac{e^{-y/4}}{1-e^{-y}}\big(\hat{r}(-\alpha_2 L) - \tfrac{1}{2}\big[e\big(\tfrac{yt}{2\pi}\big)\hat{r}(-(y+\alpha_2L))+e\big(-\tfrac{yt}{2\pi}\big)\hat{r}(y-\alpha_2L)\big]\big) \,dy.
\end{align*}
We may use Lemma \ref{1level} to deal with the terms attached to $\hat{r}(-\alpha_2 L)$, and employing the explicit formula to deal with the terms that remain,
\begin{align*}
&\frac{1}{H}\int_\mathbb{R}\sigma\Big(\tfrac{t-T}{H}\Big)\int_{-\infty}^\infty\int_{-\infty}^\infty r(\xi_1-t)r(\xi_2-t)e\big(\alpha_1 \tfrac{L}{2\pi}(\xi_1-t) + \alpha_2 \tfrac{L}{2\pi}(\xi_2-t)\big) dS(\xi_1)\, \frac{\Omega(\xi_2)}{2\pi}\,d\xi_2 \,dt \\
 &\;= O_{r,\sigma}\Big(\frac{1}{H}\Big) - \frac{1}{H}\int_\mathbb{R}\sigma\Big(\tfrac{t-T}{H}\Big)\cdot\tfrac{1}{2}\sum_\varepsilon \int_0^\infty \frac{e^{-y/4}}{1-e^{-y}} \int_{-\infty}^\infty e\big(-\tfrac{t}{2\pi}(\varepsilon_1 x + \varepsilon_2 y)\big) \hat{r}(\varepsilon_1 x - \alpha_1 L)\\
  &\;\;\;\;\;\;\;\;\;\;\;\;\;\;\;\;\;\;\;\;\;\;\;\;\;\;\;\;\;\;\;\;\;\;\;\;\; \;\;\;\;\;\;\;\;\;\;\;\;\;\;\;\;\;\;\;\;\;\;\;\;\;\;\;\;\;\;\;\;\;\;\;\;\;\; \;\;\;\;\;\;\;\;\;\;\cdot \hat{r}(\varepsilon_2 y - \alpha_2 L) e^{-x/2} \,d\big(e^x - \psi(e^x)\big) \,dy\,dt \\
 &\;= O\bigg(\bigg|\sum_\varepsilon \int_0^\infty \frac{e^{-y/4}}{1-e^{-y}} \int_{-\infty}^\infty \hat{\sigma}\big(\tfrac{H}{2\pi}(\varepsilon_1 x + \varepsilon_2 y)\big) \hat{r}(\varepsilon_1 x - \alpha_1 L) \hat{r}(\varepsilon_2 y - \alpha_2 L) e^{-x/2} \,d\big(e^x - \psi(e^x)\big) dy\bigg|\bigg)\\
 &\;= O_{r,\sigma}\Big(\frac{e^{|\tfrac{\alpha_1}{2}|L}}{H}\Big).
\end{align*}
\end{proof}

\section{Proof of Theorem \ref{main} and Theorem \ref{4corr}}

\begin{proof}[Proof of Theorem \ref{main}]
We set $\lambda = \log(T/2\pi)/2\pi$. Using Lemmas \ref{6}, \ref{7}, and \ref{offdiagonal} we have
\begin{align}
\label{G}
&\frac{1}{H}\int_\mathbb{R}\sigma\Big(\tfrac{t-T}{H}\Big)\int_{-\infty}^\infty\int_{-\infty}^\infty r\Big(\tfrac{\xi_1-t}{2\pi}\Big)r\Big(\tfrac{\xi_2-t}{2\pi}\Big)e\big(\alpha_1 \tfrac{L}{2\pi}(\xi_1-t)+\alpha_2 \tfrac{L}{2\pi}(\xi_2-t)\big)\,dS(\xi_1)dS(\xi_2)\,dt\notag\\
&\;=O\Big(\frac{T^{|\tfrac{\alpha_1}{2}|+|\tfrac{\alpha_2}{2}|}}{H}\Big)+ \int_{-\infty}^\infty\int_{-\infty}^\infty\Big[\frac{\log(T/2\pi)}{2\pi}\delta(\nu_1-\nu_2) + (2\pi)^2Q_T(2\pi(\nu_1-\nu_2))\Big]\\
&\;\;\;\;\;\;\;\;\;\;\;\;\;\;\;\;\;\;\;\;\; \;\;\;\;\;\;\;\;\;\;\;\;\;\;\;\;\;\;\;\;\;\;\;\; \;\;\;\;\;\;\;\;\; \;\;\;\;\;\;\;\;\;\;\; \;\;\;\;\;\;\cdot r(\nu_1)r(\nu_2)e(\alpha_1 L \nu_1 + \alpha_2 L \nu_2) \,d\nu_1 d\nu_2.\notag
\end{align}
On the other hand,
\begin{align*}
&\Big(\tilde{d}(\xi_1)-\frac{\Omega(\xi_1)}{2\pi}\Big)\Big(\tilde{d}(\xi_2)-\frac{\Omega(\xi_2)}{2\pi}\Big) \\
&\;= \tilde{d}(\xi_1)\tilde{d}(\xi_2) - \Big(\tilde{d}(\xi_1)-\frac{\Omega(\xi_1)}{2\pi}\Big)\cdot\frac{\Omega(\xi_2)}{2\pi} - \Big(\tilde{d}(\xi_2)-\frac{\Omega(\xi_2)}{2\pi}\Big) \cdot \frac{\Omega(\xi_1)}{2\pi} - \frac{\Omega(\xi_1)}{2\pi}\frac{\Omega(\xi_2)}{2\pi},
\end{align*}
so by Lemma 11, the left hand side of \eqref{G} is
\begin{align*}
&O\Big(\frac{T^{|\tfrac{\alpha_1}{2}|}+T^{|\tfrac{\alpha_2}{2}|}}{H}\Big) + \frac{1}{H}\int_\mathbb{R}\sigma\Big(\tfrac{t-T}{H}\Big)\int_{-\infty}^\infty\int_{-\infty}^\infty r\Big(\tfrac{\xi_1-t}{2\pi}\Big)r\Big(\tfrac{\xi_2-t}{2\pi}\Big)e\big(\alpha_1 \tfrac{L}{2\pi}(\xi_1-t)+\alpha_2 \tfrac{L}{2\pi}(\xi_2-t)\big)\\ &\;\;\;\;\;\;\;\;\;\;\;\;\;\;\;\;\;\;\;\;\;\;\;\;\;\;\;\; \;\;\;\;\;\;\;\;\;\;\;\; \;\;\;\;\;\;\;\;\;\; \;\;\;\;\;\;\;\;\;\;\;\; \;\;\;\;\;\;\; \;\;\;\;\;\;\;\;\;\; \;\;\;\;\;\;\; \;\;\;\;\;\;\;\;\cdot \Big(\tilde{d}(\xi_1)\tilde{d}(\xi_2) - \frac{\Omega(\xi_1)}{2\pi}\frac{\Omega(\xi_2)}{2\pi}\Big)\,d\xi_1 d\xi_2\,dt
\end{align*}
However,
\begin{align*}
&\frac{1}{H}\int_\mathbb{R}\sigma\Big(\tfrac{t-T}{H}\Big)\int_{-\infty}^\infty\int_{-\infty}^\infty r\Big(\tfrac{\xi_1-t}{2\pi}\Big)r\Big(\tfrac{\xi_2-t}{2\pi}\Big)e\big(\alpha_1 \tfrac{L}{2\pi}(\xi_1-t)+\alpha_2 \tfrac{L}{2\pi}(\xi_2-t)\big)\tilde{d}(\xi_1)\tilde{d}(\xi_2)\,d\xi_1 d\xi_2\,dt \\
&\;= \frac{1}{H}\int_\mathbb{R}\sigma\Big(\tfrac{t-T}{H}\Big) \sum_{\gamma \neq \gamma'} r\Big(\tfrac{\gamma-t}{2\pi}\Big)r\Big(\tfrac{\gamma'-t}{2\pi}\Big)e\big(\alpha_1\tfrac{L}{2\pi} (\gamma-t) + \alpha_2 \tfrac{L}{2\pi} (\gamma'-t)\big) \,dt \\
&\;\;\;\;\;\;+ \frac{1}{H}\int_\mathbb{R}\sigma\Big(\tfrac{t-T}{H}\Big)\sum_\gamma r^2\Big(\tfrac{\gamma-t}{2\pi}\Big) e\big((\alpha_1+\alpha_2)\tfrac{L}{2\pi}(\gamma-t)\big)\, dt\\
&\;=\frac{1}{H}\int_\mathbb{R}\sigma\Big(\tfrac{t-T}{H}\Big) \sum_{\gamma \neq \gamma'} r\Big(\tfrac{\gamma-t}{2\pi}\Big)r\Big(\tfrac{\gamma'-t}{2\pi}\Big)e\big(\alpha_1\tfrac{L}{2\pi} (\gamma-t) + \alpha_2 \tfrac{L}{2\pi} (\gamma'-t)\big) \,dt\\
&\;\;\;\;\;\;+ O_{r,\sigma}\Big(\frac{1}{H}\Big) + \frac{1}{H}\int_\mathbb{R}\sigma\Big(\tfrac{t-T}{H}\Big)\int_{-\infty}^\infty r^2\Big(\tfrac{\xi-t}{2\pi}\Big) e\big((\alpha_1+\alpha_2)\tfrac{L}{2\pi}(\xi-t)\big)\frac{\Omega(\xi)}{2\pi}\,d\xi\, dt,
\end{align*}
using Lemma \ref{1level} in the last line.

By Stirling's formula, we have that
$$
\Omega(\xi+t+T) = \log(T/2\pi)+O\Big(\frac{1}{|\xi+t+T|+2}\Big) + O\big(\log(1+\tfrac{|\xi+t|}{T})\big)
$$
so that
$$
\frac{1}{H}\int_\mathbb{R} \sigma\Big(\tfrac{t-T}{H}\Big) \frac{\Omega(\xi+t)}{2\pi} dt = \frac{\log (T/2\pi)}{2\pi} + O\Big(\frac{1}{H}\Big) + O\Big(\frac{|\xi|+H}{T}\Big)
$$
and
$$
\frac{1}{H}\int_\mathbb{R} \sigma\Big(\tfrac{t-T}{H}\Big) \frac{\Omega(\xi_1+t)}{2\pi}\frac{\Omega(\xi_1+t)}{2\pi} dt = \Big(\frac{\log(T/2\pi)}{2\pi}\Big)^2 + O\Big(\frac{\log T}{H}\Big) + O\Big(\frac{|\xi_1|+|\xi_2| +H}{T}\Big).
$$
By removing the $\delta$ function, \eqref{G} therefore implies that
\begin{align*}
&\frac{1}{H} \int_\mathbb{R} \sigma\Big(\tfrac{t-T}{H}\Big)\sum_{\gamma\neq \gamma'} r\big(\tfrac{\gamma-t}{2\pi}\big) r\big(\tfrac{\gamma'-t}{2\pi}\big) e\big(\alpha_1\tfrac{L}{2\pi}(\gamma-t)+\alpha_2 \tfrac{L}{2\pi}(\gamma'-t)\big) \,dt + O\Big(\log T\Big(\frac{H}{T}+\frac{1}{H}\Big)\Big) \\
&\;= O\Big(\frac{T^{|\tfrac{\alpha_1}{2}|+|\tfrac{\alpha_2}{2}|}}{H}\Big) \\
&\;\;\;+ \int_{-\infty}^\infty \int_{-\infty}^\infty \bigg[\Big(\frac{\log(T/2\pi)}{2\pi}\Big)^2 + Q_T(\nu_1-\nu_2)\bigg] r\big(\tfrac{\nu_1}{2\pi})r\big(\tfrac{\nu_2}{2\pi}\big)e\big(\alpha_1\tfrac{L}{2\pi}\nu_1+\alpha_2 \tfrac{L}{2\pi}\nu_2\big) \,d\nu_1 d\nu_2,
\end{align*}
as claimed.
\end{proof}

\begin{proof}[Proof of Theorem 4]
In Theorem \ref{main}, we let $h(\nu_1,\nu_2) = \omega(\nu_1-\nu_2)\eta\big(\tfrac{\nu_1+\nu_2}{2}\big)$ where $\eta$ is any function with a smooth and compactly supported Fourier transform and $\hat{\eta}(0) = 1$ say, and we set $\alpha_1=-\alpha_2 = \alpha$. The left hand side of \eqref{mainn} is
$$
\sum_{\gamma\neq \gamma'} \omega(\gamma-\gamma')e\big(\alpha\tfrac{L}{2\pi}(\gamma-\gamma')\big) \frac{1}{H}\int_\mathbb{R}\sigma\Big(\tfrac{t-T}{H}\Big)\eta\big(\tfrac{\gamma+\gamma'}{2}-t\big)\,dt.
$$
On the other hand, averaging $T$ from $0$ to $R$:
$$
\frac{1}{R}\int_0^R \frac{1}{H} \int_\mathbb{R}\sigma\Big(\tfrac{t-T}{H}\Big)\eta(\nu-t) \,dt \,dT = \frac{1}{R} \int_{-\infty}^\infty \eta(\tau) \int_{-\tfrac{R}{H}+ \tfrac{\nu-\tau}{H}}^{\tfrac{\nu-\tau}{H}}\sigma(Q)\,dQ\,dt.
$$
If $H = R^{1-\epsilon/2}$ and we fix $\delta'$ less than $\epsilon/2$, we have that for $\nu \in [R^{1-\delta'}, R-R^{1-\delta'}]$,
$$
\int_{-\infty}^\infty \eta(\tau) \int_{-\tfrac{R}{H}+ \tfrac{\nu-\tau}{H}}^{\tfrac{\nu-\tau}{H}}\sigma(Q)\,dQ\,dt = \int_{-\infty}^\infty \eta(\tau) \,d\tau + O\Big(\frac{1}{R^j}\Big),
$$
for any $j>0$ as both $g$ and $\sigma$ are Schwartz. (The implied constant will vary with $j$.) On the other hand, for $\nu \geq R + R^{1-\delta'}$ or $\nu \leq -R^{1-\delta'}$ this quantity is
$$
O\Big(\frac{1}{R^j}\Big)
$$
for the same reason. For typographical reasons we use the notation
$$
E_1 = \{(\gamma,\gamma')\;: \;\gamma \neq \gamma',\,R^{1-\delta'}\leq \tfrac{\gamma+\gamma'}{2} \leq R - R^{1-\delta'}\},
$$
and
$$
E_2 = \{(\gamma,\gamma')\;:\; \gamma \neq \gamma', \textrm{ and either }R - R^{1-\delta'} \leq \tfrac{\gamma+\gamma'}{2} \leq R + R^{1-\delta'}\textrm{ or } - R^{1-\delta'} \leq \tfrac{\gamma+\gamma'}{2} \leq R^{1-\delta'}\},
$$
and we therefore have,
\begin{align*}
&\frac{1}{R}\int_0^R \sum_{\gamma\neq \gamma'} \omega(\gamma-\gamma')e\big(\alpha\tfrac{L}{2\pi}(\gamma-\gamma')\big) \frac{1}{H}\int_\mathbb{R}\sigma\Big(\tfrac{t-T}{H}\Big)\eta\big(\tfrac{\gamma+\gamma'}{2}-t\big)\,dt\\
&\;=\frac{1}{R}\sum_{(\gamma,\gamma')\in E_1}\omega(\gamma-\gamma')e\big(\alpha\tfrac{L}{2\pi}(\gamma-\gamma')\big)\int_{-\infty}^\infty \eta(\tau) \,d\tau \\
&\;\;\;\;+ O\Bigg(\frac{1}{R}\sum_{(\gamma,\gamma')\in E_2}\omega(\gamma-\gamma')\Bigg) + O\Big(\frac{1}{R^j}\Big)\\
&\;= \frac{1}{R}\sum_{0 < \gamma\neq \gamma'\leq R}\omega(\gamma-\gamma')e\big(\alpha\tfrac{L}{2\pi}(\gamma-\gamma')\big)\int_{-\infty}^\infty \eta(\tau) \,d\tau + O\Big(\frac{\log R}{R^{\delta'}}\Big),
\end{align*}
as $\omega$ is Schwartz and our sums are therefore effectively limited to $\gamma, \gamma'$ with $\gamma-\gamma' = O(1)$. On the other hand, Theorem \ref{main} implies this is
$$
O\Big(\frac{\log R}{R^{\epsilon/2}}\Big) + \int_{-\infty}^\infty\eta(\tau)\,d\tau \int_{-\infty}^\infty \omega(u)e\big(\alpha \tfrac{\log T}{2\pi}u\big) \bigg[\frac{1}{R}\int_0^R \Big(\frac{\log (T/2\pi)}{2\pi}\Big)^2 + Q_T(u)\, dT\bigg]\, du.
$$
Selecting $\delta' > \delta$ gives us the theorem.
\end{proof}

It is worthwhile finally to discuss the difference between this prediction and one in which $Q_t(u)$ has been replaced by
$$
K_t(u) = -\frac{\sin(\pi \tfrac{\log (T/2\pi)}{2\pi} u)}{(\pi u)^2}
$$
This replacement would amount to a na\"{i}ve extension of the GUE conjecture. We want at least to check that
$$
\Delta_1 := \int_{\mathbb{R}}\omega(u)e\big(\alpha \tfrac{\log T}{2\pi}u\big)\bigg[\frac{1}{T}\int_0^T Q_t(u)-K_t(u) \,dt\bigg]\, du
$$
is not negligible, since otherwise the rather more recondite expression involved in defining $Q_t$ would be unnecessary.

In the first place, we showed above, through Lemma \ref{offdiagonal}, that
$$
\Delta_2 := \int_{\mathbb{R}}\omega(u)e\big(\alpha \tfrac{\log T}{2\pi}u\big)\bigg[\frac{1}{T}\int_0^T Q_t(u)-\tilde{Q}_t(u) \,dt\bigg] \,du = O_\delta\big(\frac{1}{T^\delta}\big),
$$
for $\delta$ and $\alpha$ as in Theorem \ref{4corr}, where
$$
\tilde{Q}_t(u) = \frac{1}{4\pi^2}\bigg(\Big(\frac{\zeta'}{\zeta}\Big)'(1+iu) - B(iu) + \Big(\frac{\zeta'}{\zeta}\Big)'(1-iu) - B(-iu) + e(-\tfrac{\log (t/2\pi)}{2\pi} u) + e(\tfrac{\log (t/2\pi)}{2\pi} u)\bigg).
$$
Indeed, if we content ourselves with $\Delta_2$ decaying like $O_k(1/\log^k T)$ for any $k$, instead of $O_\delta(1/T^\delta)$, this is obviously true as long as $\alpha$ is bound away from $-1$ and $1$ -- including $\alpha$ larger than $1$. (And quite false for $\alpha$ either $-1$ or $1$.)

On the other hand,
$$
\tilde{Q}_t(u) - K_t(u) = \frac{1}{4\pi^2}\bigg(2\Re\Big[\Big(\frac{\zeta'}{\zeta}\Big)'(1+iu) - B(iu)\Big]+\frac{2}{u^2}\bigg),
$$
so that for $\alpha$ bound away from $0$
$$
\Delta_3 := \int_{\mathbb{R}}\omega(u)e\big(\alpha \tfrac{\log T}{2\pi}u\big)\bigg[\frac{1}{T}\int_0^T \tilde{Q}_t(u) - K_t(u) \,dt\bigg] \,du = O_k\Big(\frac{1}{\log^k T}\Big),
$$
but for $\alpha = 0$, $\Delta_3$ is not even $o(1)$ as $\tilde{Q}_t(u) - K_t(u)$ has no dependence on $t$ and is not identically $0$ (this would falsely imply that $(\zeta'/\zeta)'(1+iu)$ is bounded, for instance). This is enough to see that $K_t$ cannot replace $Q_t$ in Theorem \ref{main} or \ref{4corr}.

We can examine the difference between the two predictions in greater detail. Note that for $\Re s > 1$,
$$
\sum_{n=1}^\infty \frac{\Lambda^2(n)}{n^s} = \sum_{k=1}^\infty c_k \Big(\frac{\zeta'}{\zeta}\Big)'(ks),
$$
where
$$
c_k = \sum_{d|k}\mu(d)d,
$$
so that
$$
\tilde{Q}_t(u) - K_t(u) = \frac{1}{2\pi^2}\bigg[\Re \Big(\frac{\zeta'}{\zeta}\Big)'(1+iu) + \frac{1}{u^2} + \sum_{k=2}^\infty c_k \Re \Big(\frac{\zeta'}{\zeta}\Big)'(k + iku)\bigg].
$$
By \eqref{zetadiff} the function $\Re \Big(\frac{\zeta'}{\zeta}\Big)'(1+iu) + \frac{1}{u^2}$ has troughs corresponding to each $\gamma$, however these troughs are all of the same width and height and so their appearance is only really striking for low-lying zeroes which are spread far apart. For higher zeroes with less space between them, the troughs interfere with one another. In addition, the functions $\Re \Big(\frac{\zeta'}{\zeta}\Big)'(k + iku)$ have troughs, smaller in depth and width, around $\gamma/k$. Since, for instance, $c_2 = -1$, this corresponds to a small bump in the pair correlation function around the values $\gamma/2$ -- for instance around 14.132/2 = 7.066 -- at least while tested against sufficiently smooth test functions. One can discern this bump in Figure \ref{data}.

We note in passing that for $\alpha > 1+ \epsilon$, the above discussion shows that the Bogomolny-Keating heuristics are consistent with a conjecture that
\begin{align*}
\frac{1}{T} \sum_{0<\gamma\neq \gamma' < T} w(\gamma-\gamma')e\big(\alpha \tfrac{\log T}{2\pi} (\gamma-\gamma')\big) &= o(1) + \int_\mathbb{R} w(u)e\big(\alpha \tfrac{\log T}{2\pi} u\big) \bigg[\frac{1}{T}\int_0^T \Big(\frac{\log(t/2\pi)}{2\pi}\Big)^2 + K_t(u)\,dt\bigg]\,du\\
& = o(1),
\end{align*}
shown by Chan \cite{Chan} to be essentially equivalent to a conjecture of Montgomery and Soundararajan \cite{MontSound} for the second moment of primes in short intervals. Their conjecture was based upon the Hardy-Littlewood conjectures, but even on the assumption of these conjectures, to rigorously extend the domain of $\alpha$ in Theorem \ref{4corr} while maintaining an inverse power-of-$T$ error term will require much more work.

\section{Funding}

Research supported in part by an NSF RTG Grant.

\section{Acknowledgements}

I would like to thank my advisor, Terence Tao, for a number of helpful discussions, Jeffrey Stopple, for corrections, and the anonymous referee for corrections and several useful suggestions. A discussion on the website MathOverflow, available at \url{http://mathoverflow.net/questions/83027/}, was also very helpful to me.

\end{document}